\documentclass[12pt]{amsart}

\usepackage{amsmath, amssymb}
\usepackage{hyperref}
\DeclareMathOperator{\supp}{supp}

\newtheorem{theorem}{Theorem}[section]
\newtheorem{lemma}[theorem]{Lemma}

\theoremstyle{definition}
\newtheorem{definition}[theorem]{Definition}

\theoremstyle{remark}
\newtheorem{remark}[theorem]{Remark}

\numberwithin{equation}{section}
\usepackage{bbm}

\begin{document}

\title[Full-dimensional projections of measures]{Full packing dimensional projections of measures}


\author{Nicolas Angelini}

\curraddr{}
\address{Instituto de Matemática Aplicada San Luis (IMASL), CONICET, Argentina.  
Departamento de Matemática, FCFMyN, Universidad Nacional de San Luis, Argentina.}
\email{nicolas.angelini.2015@gmail.com}
\thanks{}


\date{\today}

\begin{abstract}
We introduce a threshold parameter $D(\mu)$
for a Borel probability measure $\mu$ with compact support
$E\subset\mathbb{R}^n$ such that, for
every integer $1\leq m\leq n$, the orthogonal projection of $\mu$ onto a
typical $m$-dimensional subspace attains full packing dimension if and
only if $m\leq D(\mu)$. In the complementary regime we show that the Assouad dimension of the
support controls the possible drop of the packing dimension under
projections:
$$\dim_P^{m}\mu\geq\dim_P\mu-\max\{0,\ \dim_A E-m\}.$$
In particular, whenever $m\geq\dim_A E$, the packing dimension of every
measure supported on $E$ is preserved under orthogonal projection onto
almost every $m$-dimensional subspace. Taking supremum over the measures
supported on a set recovers, in its Assouad dimension form, the
corresponding result of Falconer, Fraser and Shmerkin for sets. A key
ingredient, of independent interest, is a sharpening of an estimate of
Falconer and Mattila for the growth of the measure of balls, in which the
ambient dimension is replaced by the Assouad dimension of the support.
\end{abstract}

\subjclass{Primary 28A78, 28A75}
\keywords{Packing dimension, orthogonal projection}

\maketitle

\section{Introduction}

Let $\mu$ be a finite Borel measure on $\mathbb{R}^n$, and for $V\in G(n,m)$ let $\pi_V$ denote the orthogonal projection onto $V$. We write $\mu_V=\mu\circ \pi_V^{-1}$ for the projected measure.

For Hausdorff dimension, it is classical that
\begin{equation}\label{ProjHausIntro}
\dim_H \mu_V=\min\{\dim_H\mu,m\}
\end{equation}
for $\gamma_{n,m}$-almost every $V\in G(n,m)$, see \cite{hu1994fractal}. In contrast, the behavior of packing dimension under projections is more subtle.

Falconer and Howroyd introduced packing dimension profiles, $\dim_P^m$, and proved that for any finite Borel measure $\mu$ on $\mathbb{R}^n$,
\[
\dim_P \mu_V\le \dim_P^m \mu
\]
for all $V\in G(n,m)$, with equality for $\gamma_{n,m}$-almost every $V$, \cite{falconer1997packing}. This shows that the packing dimension of typical projections is almost surely constant and determined by the $m$-dimensional profile of $\mu$. However, this result does not directly yield analogues of \eqref{ProjHausIntro}.

It is straightforward to see that
\[
\dim_P \mu_V \le \min\{\dim_P\mu,m\}
\]
for all $V\in G(n,m)$. On the other hand, Falconer and Mattila obtained a general lower bound: if $\mu$ is a Borel probability measure on $\mathbb{R}^n$, then for $\gamma_{n,m}$-almost all $V$,
\[
\dim_P \mu_V \ge 
\begin{cases}
\dfrac{\dim_P \mu\left(1-\frac{1}{n}\dim_H \mu\right)}
{1+\left(\frac{1}{m}-\frac{1}{n}\right)\dim_P \mu-\frac{1}{m}\dim_H \mu},
& \text{if } \dim_H \mu \le m, \\[1.2em]
m, & \text{if } \dim_H \mu \ge m,
\end{cases}
\]
and they showed that this bound is sharp in general \cite{falconer1996packing}.

The main purpose of this article is to identify conditions under which Hausdorff-type projection formulas can be recovered for packing dimension. We introduce a parameter $D(\mu)$ (Definition~\ref{maindefinition}) that governs when projections have full dimension, in the sense that
\[
\dim_P \mu_V = m \quad \text{for } \gamma_{n,m}\text{-a.e. }V
\quad \Longleftrightarrow \quad
m\le D(\mu),
\]
see Theorem~\ref{maintheorem}. This provides a complete characterization of the regime in which typical projections attain maximal dimension.

To obtain lower bounds in the complementary case, $D(\mu)<m$, we relate
the packing dimension profiles to the Assouad dimension of the support of
$\mu$. Our main estimate (Theorem~\ref{thm:profile_assouad_bound})
asserts that
\[
\dim_P^m \mu \ge \dim_P \mu - \max\{0,\dim_A(\supp\mu)-m\};
\]
note that the measure enters this bound only through $\dim_P\mu$, so that
the size of the possible loss is governed by the support alone and is
common to all measures supported on a given set. For integer $m$, by the
projection theorem of Falconer and Howroyd this reads
\[
\dim_P \mu_V \ge \dim_P \mu - \max\{0,\dim_A(\supp\mu)-m\}
\]
for $\gamma_{n,m}$-almost every $V\in G(n,m)$. In particular, if
$m\ge \dim_A(\supp\mu)$, then $\dim_P \mu_V=\dim_P\mu$ for typical
projections: the Assouad dimension of the support controls the stability
of the packing dimension under orthogonal projections.

The corresponding statement for sets was obtained by Falconer, Fraser and
Shmerkin \cite{falconer2021assouad}. Their argument is not available here, it relies on the
capacity formulation of the box dimension profiles, in which one is free
to choose a different measure at each scale, whereas in our setting the
measure is given and is the same at every scale.
Theorem~\ref{thm:profile_assouad_bound} applies to each individual measure, and taking
supremum over the measures supported on a set recovers their result in its
Assouad dimension form (see Remark~\ref{rem:not_adaptation}). Analogous
results for the $\theta$-intermediate dimensions were obtained in
\cite{AngeliniMolter2025}, see also \cite{ANGELINI2026130039}.

\section{Preliminaries}

We write $B(x,r)$ to the ball of center $x$ and radious $r$ and for a bounded set $E\subset \mathbb{R}^n$ we write $|E|$ for its diameter.

Throughout the paper, $\mu$ denotes a Borel probability measure on $\mathbb{R}^n$, i.e. $\mu(\mathbb{R}^n)=1$. We write $\supp \mu$ for the support of $\mu$.

For $E\subset\mathbb{R}^n$ Borel, $\mathcal{M}_c^+(E)$ denotes the set of
Borel probability measures with compact support contained in $E$.

For $1\le m\le n$, we write $G(n,m)$ for the Grassmannian of $m$-dimensional linear subspaces of $\mathbb{R}^n$,
endowed with its natural invariant probability measure $\gamma_{n,m}$, see \cite[Chapter 3]{mattila1999geometry}.

For $V\in G(n,m)$, let $\pi_V:\mathbb{R}^n\to V$ denote the orthogonal projection onto $V$, and define the projected measure by
\[
\mu_V=\mu\circ \pi_V^{-1}.
\]
This is a finite Borel measure on $V$.

Hausdorff dimension will be denoted by $\dim_H$. 

The Assouad dimension of a set $E\subset\mathbb{R}^n$, $\dim_A E$ is defined by

\begin{align*}
\dim_A E := \inf \Bigl\{ s \ge 0 :\ &\exists C>0 \text{ such that for all } 0<r<R,\ x\in E,\\
&N_r\bigl(E \cap B(x,R)\bigr)
\le C \left(\frac{R}{r}\right)^s
\Bigr\}.
\end{align*}

where $N_r(A)$ denotes the minimal number of balls of radius $r$ needed
to cover $A$. The value of $\dim_A E$ is unchanged if the covering balls
are replaced by cubes of side length $r$, by the cubes of the mesh of
side length $r$ oriented at the origin that intersect $A$, or by maximal
$r$-separated subsets of $A$, and also if the ball $B(x,R)$ is replaced
by the cube $Q(x,R)$ of side length $R$ centered at $x$. We use these alternative formulations interchangeably, and
refer to \cite{fraser2020assouad} for a detailed discussion.

Throughout, we shall use without further mention that, if $\mu$ is a
Borel probability measure on $\mathbb{R}^n$ with compact support
$E$, then
$$\dim_P\mu\leq\dim_P E\leq\dim_A E\leq n.$$

Let $0<s<\infty$, a closed set $E\subset\mathbb{R}^n$ is called \emph{$s$-regular} if there exists Borel measure $\mu$ on $\mathbb{R}^n$ and a constant $C\ge 1$ such that $\mu(\mathbb{R}^n\setminus E)=0$ and
\[
C^{-1} r^s \le \mu\bigl(E\cap B(x,r)\bigr)
\le C r^s
\]
for all $x\in E$ and all $0<r<|E|$, $r<\infty$.

The packing dimension of a Borel measure $\mu$ is defined by

\[
\dim_P \mu = \inf\{ \dim_P E : E \text{ is a Borel set with }\mu(E) > 0 \},
\]

where $\dim_P E$ denote the packing dimension of the set $E$.

An equivalent characterization is as follows, see \cite[Lemma 4.1]{hu1994fractal}
\[
\dim_P \mu =\sup \left\{ t\geq 0\, :\, \liminf_{r \to 0} \frac{\mu(B(x,r))}{r^t} = 0 \, \text{ for }\mu-a.e.x\in\mathbb{R}^n\right\}.
\]

After that, in \cite{falconer1997packing}, Falconer and Howroyd gives a characterization of packing dimension in terms of the potential defined by 
$$F_n^\mu (x,r)=\int_{\mathbb{R}^n} \min \{1,r^n|x-y|^{-n}\}d\mu(y).$$

\begin{lemma}\cite[Corollary 3]{falconer1997packing}
    For $\mu$ a finite Borel measure on $\mathbb{R}^n$ we have
    $$\dim_P \mu = \sup\left\{ t\geq 0\, :\, \liminf_{r\to 0}\frac{F_n^{\mu}(x,r)}{r^t}=0 \text{ for }\mu-a.e.x\in\mathbb{R}^n\right\}.$$
\end{lemma}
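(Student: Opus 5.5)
We compare $F_n^\mu(x,r)$ with $\mu(B(x,r))$ through two inequalities, and then feed each into the characterization of $\dim_P\mu$ from \cite[Lemma 4.1]{hu1994fractal}.

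\emph{Lower comparison.} On $B(x,r)$ the integrand $\min\{1,r^n|x-y|^{-n}\}$ equals $1$, so
$$\mu(B(x,r))\le F_n^\mu(x,r)\qquad\text{for every }x\text{ and }r.$$

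\emph{Upper comparison.} Split the integral over the dyadic annuli $B(x,2^{k+1}r)\setminus B(x,2^kr)$, $k\ge0$. This gives
$$F_n^\mu(x,r)\le \mu(B(x,r))+\sum_{k\ge 0}2^{-kn}\mu(B(x,2^{k+1}r)).$$

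\emph{Step 1: $\dim_P\mu\le$ the supremum in the lemma.} Suppose $t$ satisfies $\liminf_{r\to0} F_n^\mu(x,r)/r^t=0$ for $\mu$-a.e. $x$. By the lower comparison, $\liminf_{r\to0}\mu(B(x,r))/r^t=0$ at the same points. Hence the set of admissible $t$ in the lemma is contained in the set of admissible $t$ in the characterization of \cite{hu1994fractal}.

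\emph{Step 2: the reverse inequality.} Fix $t<\dim_P\mu$ and choose $t<t'<\dim_P\mu$; we may take $t<n$, since $\dim_P\mu\le n$. For $\mu$-a.e. $x$ we then have $\liminf_{r\to0}\mu(B(x,r))/r^{t'}=0$. We must show $\liminf_{r\to0}F_n^\mu(x,r)/r^t=0$.

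The difficulty is that the upper comparison involves $\mu(B(x,2^{k+1}r))$ at \emph{all} larger scales. A small value of $\mu(B(x,r))$ at one radius does not control these. This is the main obstacle.

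The plan is to replace $r^{t'}$ by a ``good scale'' with a uniform bound at all larger radii. Fix $x$ with $\liminf\mu(B(x,r))/r^{t'}=0$, and suppose $\mu(B(x,\rho))\le \epsilon\rho^{t'}$ for some small $\rho$ with small $\epsilon$. Define
$$\phi(s)=\mu(B(x,s))/s^{t}.$$
Let $r\ge\rho$ be a point where $\sup_{s\ge r}\phi(s)$ is essentially attained; more precisely, choose $r$ with $\phi(s)\le 2\phi(r)$ for all $s\ge r$. Such an $r$ exists because $\phi(s)\le s^{-t}\to0$ as $s\to\infty$ and $\phi$ is bounded on $[\rho,\infty)$.

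For this $r$ the annulus sum is controlled:
$$F_n^\mu(x,r)\le 2\phi(r)r^t\Bigl(1+\sum_k 2^{-kn}2^{(k+1)t}\Bigr)\le C\phi(r)r^t,$$
where $C$ is finite because $t<n$.

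Two cases remain.
\begin{itemize}
\item If $r$ can be taken small, we need $\phi(r)$ to be small. This holds because $\phi(r)\le\sup_{s\ge\rho}\phi(s)$, and this supremum is governed by the behaviour near $\rho$ and at intermediate scales.
\item Any remaining freedom is handled by choosing $r=\rho$ whenever $\phi(\rho)$ is already nearly maximal on $[\rho,\infty)$.
\end{itemize}

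To make the first case rigorous I would argue by contradiction. If $F_n^\mu(x,r)\ge c\,r^{t}$ for all small $r$, then the upper comparison bounds a weighted average of $\phi$ over scales $\ge r$ from below. Iterating this from $\rho$ upward shows $\mu(B(x,s))$ cannot be as small as $\epsilon s^{t'}$ for arbitrarily small $s$, because the slack $t'-t>0$ absorbs the geometric sum. This contradiction yields $\liminf F_n^\mu(x,r)/r^t=0$.

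This doubling-scale bookkeeping, which exploits the gap $t'-t$ against the decay $2^{-k(n-t)}$, is the heart of the argument. I expect it to require the most care.
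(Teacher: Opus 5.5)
Your Step 1 is correct, though mislabelled: the comparison $\mu(B(x,r))\le F_n^\mu(x,r)$ shows that the supremum in the lemma is \emph{at most} $\dim_P\mu$.

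The gap is in Step 2, and more careful bookkeeping will not close it. Your argument is purely pointwise. It fixes one $x$ with $\liminf_{r\to0}\mu(B(x,r))/r^{t'}=0$ and uses only the monotone function $h\mapsto\mu(B(x,h))$. At a single point, however, the implication you want is false whenever $0<t<t'<nt/(n-t)$. This regime cannot be avoided: $nt/(n-t)>\dim_P\mu>t'$ as soon as $t>n\dim_P\mu/(n+\dim_P\mu)$.

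Here is a counterexample at a single point. Let $\rho_k\downarrow 0$ very fast, $\varepsilon_k\to0$, $a_0=1$, $a_k=\varepsilon_k\rho_k^{t'}$, and set $\mu=\sum_{k\ge1}(a_{k-1}-a_k)\delta_{y_k}$ with $|y_k|=2\rho_k$. Then $\mu(B(0,\rho_k))=a_k=\varepsilon_k\rho_k^{t'}$. For $r\in(\rho_{k+1},\rho_k]$, the atoms $y_{k+1}$ and $y_k$ give $F_n^\mu(0,r)\gtrsim a_k+a_{k-1}(r/\rho_k)^n\gtrsim a_k^{(n-t)/n}a_{k-1}^{t/n}\rho_k^{-t}\,r^t$. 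This is at least $c\,r^t$ once $\rho_k$ is chosen with $a_{k-1}^{t}\ge C\varepsilon_k^{-(n-t)}\rho_k^{\,nt-(n-t)t'}$, which is possible because $nt-(n-t)t'>0$. So $\liminf_{r\to0} F_n^\mu(0,r)/r^t>0$. The lemma survives only because $0$ is $\mu$-null.

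The point is that $\mu(B(x,\cdot))$ can jump from $a_k$ to $a_{k-1}$ within one dyadic scale above a good radius, and no slack $t'-t$ absorbs such a jump. For the same reason your near-maximiser of $\phi$ on $[\rho,\infty)$ does not help. Already $\sup_{s\ge\rho}\phi(s)\ge\phi(1)=\mu(B(x,1))$, which is not small, so neither of your ``two cases'' is justified.

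What is missing is an almost-everywhere statement that rules out such jumps at $\mu$-typical points. This is the Falconer--Mattila growth estimate, Lemma~\ref{lem:assouad_measure_growth} (its case $s=n$; since $\dim_A E\le n$ it gives the exponent $n(1+\varepsilon)$ in any case). It says that for $\mu$-a.e.\ $x$,
\[
\mu(B(x,h))\le(4h/\rho)^{n(1+\varepsilon)}\mu(B(x,\rho))
\]
for all small $\rho$ and all $\rho^a\le h\le 1$. It is proved by Fubini and a Borel--Cantelli sum over dyadic pairs, not pointwise.

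The paper only quotes the lemma from Falconer and Howroyd. With this estimate, though, the nontrivial direction is exactly the computation in the proof of Theorem~\ref{thm:profile_assouad_bound} with $m=n$:
\begin{itemize}
\item Take $r_k\to0$ with $\mu(B(x,r_k))\le r_k^{t'}$.
\item Write $F_n^\mu(x,r_k)=n\,r_k^n\int_{r_k}^\infty h^{-n-1}\mu(B(x,h))\,dh$ and split the integral at $r_k^a$ and at $1$.
\item On $[r_k,r_k^a]$ use only monotonicity plus growth at $h=r_k^a$, giving at most $4^{n(1+\varepsilon)}r_k^{\,t'-n(1+\varepsilon)(1-a)}$.
\item On $[r_k^a,1]$ use the growth estimate, giving at most $\varepsilon^{-1}4^{n(1+\varepsilon)}r_k^{\,t'-n\varepsilon}$.
\item On $[1,\infty)$ the contribution is at most $r_k^n$.
\end{itemize}
All three bounds are $o(r_k^t)$ for $a$ close to $1$ and $\varepsilon$ small.
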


In the same article, in order to obtain sharp projection theorems, the authors introduced packing dimension profiles. For each $0 \leq m \leq n$, the $m$-dimensional packing dimension profile of a measure $\mu$ is defined as follows.

\begin{definition}
    For a finite Borel measure $\mu$ on $\mathbb{R}^n$ and $0\leq m\leq n$ define
    $$F_m^\mu (x,r)=\int_{\mathbb{R}^n} \min \{1,r^m|x-y|^{-m}\}d\mu(y)$$
    and 
   $$\dim_P^m \mu = \sup\left\{ t\geq 0\, :\, \liminf_{r\to 0}\frac{F_m^{\mu}(x,r)}{r^t}=0 \text{ for }\mu-a.e.x\in\mathbb{R}^n\right\}.$$
\end{definition}

The fundamental result of Falconer and Howroyd asserts that for a finite Borel measure $\mu$ on $\mathbb{R}^n$, the packing dimension of the projection $\mu_V$ onto a typical $m$-dimensional subspace $V$ is given by the $m$-dimensional packing dimension profile of $\mu$:

\begin{theorem}\label{ProjPacking}\cite[Theorem 6]{falconer1997packing}
    Let $\mu$ be a finite Borel measure on $\mathbb{R}^n$. Then, for all $V\in G(n,m)$ we have
    $$\dim_P \mu_V \leq \dim_P^m \mu $$
    with an equality for $\gamma_{n,m}-a.e.V\in G(n,m)$.
\end{theorem}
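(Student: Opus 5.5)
For the upper bound we compare potentials pointwise, and for the almost-sure equality we average over $V\in G(n,m)$ and apply Fatou's lemma. Throughout, we identify $V$ isometrically with $\mathbb{R}^m$. Then the Lemma (\cite[Corollary 3]{falconer1997packing}), applied in $V$ with $n$ replaced by $m$, gives
$$\dim_P\mu_V=\sup\Bigl\{t\ge 0:\ \liminf_{r\to0} F_m^{\mu_V}(u,r)/r^t=0 \text{ for } \mu_V\text{-a.e. } u\in V\Bigr\}.$$
By the change of variables $u=\pi_V x$ we have $F_m^{\mu_V}(\pi_V x,r)=\int \min\{1,r^m|\pi_V(x-y)|^{-m}\}\,d\mu(y)$. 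Moreover, "$\mu_V$-a.e. $u$" is the same as "$\mu$-a.e. $x$" with $u=\pi_V x$.

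\emph{Upper bound.} Since $|\pi_V(x-y)|\le|x-y|$, the integrands satisfy $\min\{1,r^m|\pi_V(x-y)|^{-m}\}\ge \min\{1,r^m|x-y|^{-m}\}$. Hence $F_m^{\mu_V}(\pi_V x,r)\ge F_m^\mu(x,r)$ for every $x$ and $r$. Suppose $t$ is admissible for $\mu_V$, i.e. the liminf of $F_m^{\mu_V}(\pi_Vx,r)/r^t$ is $0$ for $\mu$-a.e. $x$. Then the same holds for $F_m^\mu(x,r)/r^t$, so $t\le\dim_P^m\mu$. Taking the supremum over admissible $t$ gives $\dim_P\mu_V\le\dim_P^m\mu$ for every $V$.

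\emph{Lower bound for a.e.\ $V$.} The key ingredient is the standard Grassmannian estimate $\gamma_{n,m}\{V:|\pi_V z|\le \rho\}\le C(\rho/|z|)^m$ for $z\ne0$ (see \cite[Chapter 3]{mattila1999geometry}). Writing the integral of a $[0,1]$-valued function as $\int_0^1\gamma_{n,m}\{\cdot>\lambda\}\,d\lambda$, we get, for $0<r<|z|$,
$$\int \min\{1,r^m|\pi_V z|^{-m}\}\,d\gamma_{n,m}(V)\le \int_0^1 \min\Bigl\{1,\frac{C r^m}{\lambda |z|^m}\Bigr\}d\lambda\le C'\Bigl(\frac{r}{|z|}\Bigr)^m\Bigl(1+\log\frac{|z|}{r}\Bigr).$$
For $|z|\le r$ the integrand is $1$ and the bound is trivial. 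We may assume the support of $\mu$ is bounded; otherwise we restrict $\mu$ to large balls, which does not affect these local quantities. Integrating in $y$ then gives
$$\int F_m^{\mu_V}(\pi_Vx,r)\,d\gamma_{n,m}(V)\le C''\log(1/r)\,F_m^\mu(x,r)$$
for small $r$.

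Now fix $t<t'<\dim_P^m\mu$ with $t'$ admissible for $\mu$. To avoid measurability problems with $x$-dependent radii, we note that $F_m^\mu(x,r)\le F_m^\mu(x,2r)\le 2^mF_m^\mu(x,r)$, and similarly for $\mu_V$. Hence all liminfs may be taken along $r=2^{-k}$, and the sequence $k_j(x)$ realising the liminf can be chosen measurably in $x$. For $\mu$-a.e.\ $x$, choose $k_j=k_j(x)\to\infty$ with $F_m^\mu(x,2^{-k_j})\le 2^{-k_jt'}$. By Fatou's lemma,
$$\int\liminf_j \frac{F_m^{\mu_V}(\pi_Vx,2^{-k_j})}{2^{-k_jt}}\,d\gamma_{n,m}(V)\le \liminf_j C''k_j\log 2\cdot 2^{-k_j(t'-t)}=0.$$
Thus for $\mu$-a.e.\ $x$, the liminf of $F_m^{\mu_V}(\pi_Vx,r)/r^t$ vanishes for $\gamma_{n,m}$-a.e.\ $V$. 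The map $(x,V)\mapsto \liminf_k F_m^{\mu_V}(\pi_Vx,2^{-k})2^{kt}$ is Borel, so Fubini swaps the quantifiers: for $\gamma_{n,m}$-a.e.\ $V$ the property holds for $\mu$-a.e.\ $x$. This means $\dim_P\mu_V\ge t$. Letting $t\uparrow\dim_P^m\mu$ along a countable sequence gives equality almost surely. The main obstacle I expect is the Grassmannian estimate together with the log-factor integration; the measurability and Fubini step is routine once everything is discretised to dyadic radii.
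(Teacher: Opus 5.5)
Your argument is correct. The paper does not prove this statement; it only quotes \cite[Theorem 6]{falconer1997packing}, and your proof is essentially Falconer and Howroyd's. For the upper bound you use the pointwise inequality $F_m^{\mu_V}(\pi_V x,r)\ge F_m^\mu(x,r)$. For the almost-sure lower bound you average the projected kernel over $G(n,m)$ via $\gamma_{n,m}\{V:|\pi_V z|\le\rho\}\le C(\rho/|z|)^m$, accept the harmless logarithmic loss, and finish with Fatou and Fubini. Your reduction to dyadic radii and the joint continuity of $(x,V)\mapsto F_m^{\mu_V}(\pi_Vx,r)$ deal with the measurability issues.

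One step needs repair. You reduce to bounded support by ``restricting $\mu$ to large balls'' on the grounds that these are local quantities. That is not justified as stated, because $F_m^{\mu_V}(\pi_V x,r)$ is not local in $x$: for a fixed $V$ the projection can bring distant mass arbitrarily close to $\pi_V x$. The simplest fix is to drop the reduction and estimate the far part directly.

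For $|x-y|\ge 1$ and $r<1$, your averaged kernel bound gives
$$C'\Bigl(\frac{r}{|x-y|}\Bigr)^m\Bigl(1+\log\frac{|x-y|}{r}\Bigr)\le C' r^m\bigl(1+\log(1/r)\bigr)\,|x-y|^{-m}\bigl(1+\log|x-y|\bigr)\le C' r^m\bigl(1+\log(1/r)\bigr).$$
For $|x-y|<1$ you have $\log(|x-y|/r)\le\log(1/r)$. Together these give
$$\int F_m^{\mu_V}(\pi_Vx,r)\,d\gamma_{n,m}(V)\le C'\bigl(1+\log(1/r)\bigr)\bigl(F_m^\mu(x,r)+r^m\mu(\mathbb{R}^n)\bigr).$$
The extra term is negligible in your Fatou step because $t<\dim_P^m\mu\le m$.
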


The best lower bound was obtained in \cite{falconer1996packing} by Mattila and Falconer

\begin{theorem}\cite[Theorem 3.3]{falconer1996packing}
    Let $\mu$ be a Borel probability measure on $\mathbb{R}^n$. Then for $\gamma_{n,m}$ almost all $V\in G(n,m)$, 
    \[
\dim_P \mu_V \ge 
\begin{cases}
\dfrac{\dim_P \mu\left(1-\frac{1}{n}\dim_H \mu\right)}
{1+\left(\frac{1}{m}-\frac{1}{n}\right)\dim_P \mu-\frac{1}{m}\dim_H \mu},
& \text{if } \dim_H \mu \le m, \\[1.2em]
m, & \text{if } \dim_H \mu \ge m.
\end{cases}
\]
\end{theorem}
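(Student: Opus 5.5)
By Theorem~\ref{ProjPacking}, $\dim_P\mu_V=\dim_P^m\mu$ for $\gamma_{n,m}$-a.e.\ $V$. So the statement reduces to the deterministic lower bound
\[
\dim_P^m\mu\ \ge\ u(s,t):=\frac{tm(n-s)}{mn+(n-m)t-ns}
\]
for every $s<\dim_H\mu$ and $t<\dim_P\mu$ (and $\ge m$ when $s\ge m$). Letting $s\uparrow\dim_H\mu$ and $t\uparrow\dim_P\mu$ then gives the theorem. Multiplying numerator and denominator of the displayed formula by $mn$ shows it equals $u(\dim_H\mu,\dim_P\mu)$.

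The plan rests on two pointwise facts at $\mu$-a.e.\ $x$.
\begin{itemize}
\item Since the lower local dimension is at least $\dim_H\mu$ almost everywhere, $\mu(B(x,\rho))\le C_x\rho^s$ for all $\rho>0$.
\item By the Falconer--Howroyd characterization of $\dim_P\mu$ through $F_n^\mu$, there are radii $R_k\to0$ with $F_n^\mu(x,R_k)\le R_k^t$.
\end{itemize}
From the second fact and the definition of $F_n^\mu$, $\mu(B(x,\rho))\le R^t(\rho/R)^n$ for $\rho\ge R$, and $\mu(B(x,R))\le R^t$. Combining the two gives
\[
\mu(B(x,\rho))\le\min\{R^{t-n}\rho^n,\ C_x\rho^s\}\quad(\rho\ge R).
\]
The two bounds cross at $\rho_0=R^{(n-t)/(n-s)}\ge R$.

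Next take $r<R$ and write $F_m^\mu(x,r)$ in layer-cake form:
\[
F_m^\mu(x,r)=\mu(B(x,r))+\int_r^\infty m\,r^m\rho^{-m-1}\mu(B(x,\rho))\,d\rho .
\]
I split the range at $R$ and $\rho_0$.
\begin{itemize}
\item On $\rho\le R$ the total contribution is $\lesssim R^t$.
\item On $[R,\rho_0]$ we get $\lesssim r^mR^{t-n}\rho_0^{\,n-m}=r^m\rho_0^{\,s-m}$.
\item On $[\rho_0,\infty)$, if $s<m$, we get $\lesssim r^m\rho_0^{\,s-m}$.
\end{itemize}
Hence $F_m^\mu(x,r)\lesssim R^t+r^m\rho_0^{\,s-m}$.

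Choose $r=r_k$ to balance the two terms, i.e.\ $r^m=R^t\rho_0^{\,m-s}$. This gives $\log r/\log R=\bigl(t+(m-s)(n-t)/(n-s)\bigr)/m>1$, so $r<R$ and $r_k\to0$. Then $F_m^\mu(x,r_k)\lesssim r_k^{\,u}$ with
\[
u=\frac{tm}{t+(m-s)(n-t)/(n-s)}=u(s,t).
\]
Thus $\liminf_{r\to0}F_m^\mu(x,r)/r^{u'}=0$ for every $u'<u$ and $\mu$-a.e.\ $x$, which gives $\dim_P^m\mu\ge u(s,t)$.

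In the case $\dim_H\mu>m$, take $m<s<\dim_H\mu$. The tail integral is then bounded by $C_x r^m\int_0^{\mathrm{diam}}\rho^{s-m-1}\,d\rho\lesssim r^m$ for every small $r$, which gives $\dim_P^m\mu\ge m$ directly. The boundary case $\dim_H\mu=m$ follows by continuity of the formula, since its value at $\dim_H\mu=m$ equals $m$.

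The main obstacle I expect is the bookkeeping in the middle step. One has to check that the crossover and balancing choices are consistent ($r<R\le\rho_0$). One also has to confirm that the $x$-dependent constant $C_x$ and the $x$-dependent sequence $R_k$ are harmless, since the profile is defined pointwise $\mu$-a.e. Finally, the algebra must reproduce exactly the Falconer--Mattila expression. No uniformity in $x$ is required, which is why I avoid restricting $\mu$ to a Frostman set.
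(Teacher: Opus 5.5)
The paper does not prove this statement. It is quoted from \cite{falconer1996packing} as background, so there is no in-paper proof to compare against. Your argument is correct and gives a short derivation from Theorem~\ref{ProjPacking} and the $F_n^\mu$-characterization of $\dim_P\mu$.

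The $F_n^\mu$-characterization does real work. The inequality $F_n^\mu(x,R)\le R^t$ encodes the bound $\mu(B(x,\rho))\le R^{t-n}\rho^n$ for every $\rho\ge R$. That is exponent-$n$ growth outward from the good scale $R$, with no $\varepsilon$- or $a$-loss. In the paper's own arguments this role is played by Lemma~\ref{lem:assouad_measure_growth}, which improves the exponent to $\dim_A E$ at the price of those losses.

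The closest argument in the paper is the proof of Theorem~\ref{thm:profile_assouad_bound}. It uses the same layer-cake formula for $F_m^\mu$. However, it evaluates that formula at the very scale $r_k$ where $\mu(B(x,r_k))\le r_k^{p}$, and it has no Hausdorff-dimension input. That is why it only yields an additive loss.

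Two extra ingredients of yours produce the rational Falconer--Mattila expression:
\begin{itemize}
\item the Frostman tail bound $C_x\rho^s$ beyond $\rho_0=R^{(n-t)/(n-s)}$;
\item evaluation at the smaller balanced scale $r=R^{\lambda}$, where $\lambda-1=(t-s)(n-m)/\bigl(m(n-s)\bigr)>0$.
\end{itemize}

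The two approaches also combine. Suppose you run your balancing with Lemma~\ref{lem:assouad_measure_growth} in place of the $F_n^\mu$ bound. The window $[R,R^{a}]$ then costs only $O(1-a)$ in the exponent, which is absorbed because $\lambda>1$. For $m<\dim_A(\supp\mu)$ one obtains the Falconer--Mattila expression with $n$ replaced by $\dim_A(\supp\mu)$. This bound is never weaker than Theorem~\ref{thm:profile_assouad_bound}.

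A few small repairs are needed.
\begin{itemize}
\item The layer-cake identity is $F_m^\mu(x,r)=m\,r^m\int_r^\infty\rho^{-m-1}\mu(B(x,\rho))\,d\rho$, as in the proof of Theorem~\ref{thm:profile_assouad_bound}. Your extra summand $\mu(B(x,r))$ double counts. This is harmless, since you only use an upper bound.
\item Fix $s<t$; this is needed for $\rho_0\ge R$ and for $\lambda>1$.
\item Allow $s=0$ when $\dim_H\mu=0$.
\item Treat $m=n$ separately. It is trivial, and your integral over $[R,\rho_0]$ uses $n>m$.
\item Your first pointwise fact relies on the convention $\dim_H\mu=\inf\{\dim_H F:\mu(F)>0\}$, which parallels the paper's definition of $\dim_P\mu$.
\item The statement does not assume compact support. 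So in the case $\dim_H\mu>m$, cut the tail integral at $\rho=1$ and use $\mu(B(x,\rho))\le 1$ beyond it, rather than cutting at the diameter. The conclusion $F_m^\mu(x,r)\lesssim_x r^m$ is unchanged.
\end{itemize}
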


Finally, it is well known (see \cite{hu1994fractal}, or
\cite[(4.1)]{falconer1997packing}) that
\begin{equation}\label{eq:dimP_set_sup}
\dim_P E=\sup\{\dim_P\mu\ :\ \mu\in\mathcal{M}_c^+(E)\},
\end{equation}
which led Falconer and Howroyd to define the $s$-packing dimension
profile of $E$, for $0\leq s\leq n$, by
$$\dim_P^s E:=\sup\{\dim_P^s\mu\ :\ \mu\in\mathcal{M}_c^+(E)\},$$
see \cite[(5.3)]{falconer1997packing}.

Falconer, Fraser and Shmerkin proved that the Assouad dimension of a set
controls the possible drop of its packing dimension profiles. To state
their result, recall the upper Assouad spectrum
$\overline{\dim}_A^{\theta}$, $\theta\in(0,1)$, defined exactly as the
Assouad dimension but with the scales restricted to
$0<r\leq R^{1/\theta}<R<1$; it satisfies
$\overline{\dim}_A^{\theta}F\leq\dim_A F$, see \cite{fraser2020assouad}.

\begin{theorem}[{\cite[Theorem 2.2]{falconer2021assouad}}]\label{thm:FFS}
Let $s\in(0,n]$ and $\theta\in(0,1)$. If $F\subseteq\mathbb{R}^{n}$ is a
Borel set, then
$$\dim_P^{s}F\ \geq\ \dim_P F-\max\bigl\{0,\ \overline{\dim}_A^{\theta}F-s,\
(\dim_A F-s)(1-\theta)\bigr\}.$$
In particular, since the maximum above never exceeds
$\max\{0,\dim_A F-s\}$,
\begin{equation}\label{eq:FFS_assouad_form}
\dim_P^{s}F\ \geq\ \dim_P F-\max\{0,\ \dim_A F-s\},
\end{equation}
which we refer to as the \emph{Assouad dimension form} of the theorem.
\end{theorem}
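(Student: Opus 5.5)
The main inequality of Theorem~\ref{thm:FFS} is quoted from \cite[Theorem 2.2]{falconer2021assouad}, and I would take it as given. What remains is the ``in particular'' clause, i.e.\ the Assouad dimension form \eqref{eq:FFS_assouad_form}. The plan is to fix any single $\theta\in(0,1)$, say $\theta=1/2$, and show that each of the three terms in the maximum is at most $M:=\max\{0,\dim_A F-s\}$. The maximum is then at most $M$, so the quoted lower bound for $\dim_P^s F$ is at least $\dim_P F-M$, which is \eqref{eq:FFS_assouad_form}.

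The three comparisons are elementary.
\begin{itemize}
\item The term $0$ is trivially $\le M$.
\item The upper Assouad spectrum satisfies $\overline{\dim}_A^{\theta}F\le\dim_A F$. This holds because it is defined by the same covering condition as $\dim_A F$, restricted to the subfamily of scales $0<r\le R^{1/\theta}<R<1$, so any admissible exponent $s'$ for $\dim_A F$ is admissible for the spectrum. Hence $\overline{\dim}_A^{\theta}F-s\le\dim_A F-s\le M$.
\item For the last term I split into two cases. If $\dim_A F-s\ge 0$, then $(\dim_A F-s)(1-\theta)\le\dim_A F-s=M$, since $0<1-\theta<1$. If $\dim_A F-s<0$, then $(\dim_A F-s)(1-\theta)<0=M$.
\end{itemize}

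There is no real obstacle in this argument. The only point to keep in mind is that \eqref{eq:FFS_assouad_form} uses just one value of $\theta$ rather than optimising over $\theta$, so the bound is valid (though possibly weaker than the $\theta$-dependent form). It also holds with the convention that $\dim_A F$ may equal $n$, since all quantities involved are finite.
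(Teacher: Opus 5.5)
Your proposal is correct and matches the paper: the main inequality is taken from \cite[Theorem 2.2]{falconer2021assouad}, and the Assouad dimension form follows by bounding each term of the maximum by $\max\{0,\dim_A F-s\}$, using $\overline{\dim}_A^{\theta}F\le\dim_A F$ and $0<1-\theta<1$. The only difference is that you check $\overline{\dim}_A^{\theta}F\le\dim_A F$ directly from the restricted range of scales, whereas the paper cites \cite{fraser2020assouad} for it.
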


In \cite{falconer2021assouad} the profiles $\dim_P^{s}F$ are defined
through capacities; for integer exponents this definition coincides with
the one given above, since both quantities equal the packing dimension
of $\pi_V F$ for $\gamma_{n,m}$-almost every $V\in G(n,m)$, see
\cite[Theorem 10]{falconer1997packing} and
\cite[Theorem 2.1]{falconer2021assouad}.

\section{Estimates for the measure of balls across scales}
The goal of this section is Lemma~\ref{Lemma 3.9}, an upper bound
for the measure of balls $\mu(B(x,r))$ valid, at $\mu$-almost every
point, along suitable windows of scales of the form
$[\delta^{a},\delta^{a\theta}]$. The exponent in the bound involves the
packing dimension of $\mu$ and the Assouad dimension of its support,
rather than the ambient dimension $n$. To achieve this, we first show
that every compact set of Assouad dimension less than $n$ is contained
in a compact $s-$regular set of exponent $s$ arbitrarily close to its
Assouad dimension (Lemma~\ref{Assouad Lemma}); running the argument of
Falconer and Mattila \cite{falconer1996packing} against the resulting
regular measure, instead of the Lebesgue measure, then yields the
required growth estimate (Lemma~\ref{lem:assouad_measure_growth}).

\begin{lemma}\label{Assouad Lemma}
Let $E \subset \mathbb{R}^n$ be a compact set with $\dim_{\mathrm A} E < n$.
Then, for every $\varepsilon>0$, there exist
$t\in(\dim_A E,\, \dim_A E+\varepsilon)$, a compact set
$F\subset\mathbb{R}^n$ with $E\subset F$, and a $t$-regular Borel
probability measure $\mu$ with $\operatorname{supp}\mu=F$.
\end{lemma}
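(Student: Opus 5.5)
The plan is to build $F$ as the limit set of a dyadic tree in which every surviving cube has exactly the same number $N$ of surviving children. The tree is arranged so that all cubes meeting $E$ are kept, and $N$ is tuned so that $t=\log N/(k\log 2)$ lies in $(\dim_A E,\dim_A E+\varepsilon)$. The natural uniform measure on the tree is then $t$-regular.

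\textbf{Setup.} Write $s=\dim_A E<n$ and fix $s'$ with $s<s'<\min\{n,s+\varepsilon\}$. After a translation and dilation (which preserve $t$-regularity up to constants) we may assume $E\subset[0,1]^n$. By the definition of $\dim_A E$, in its formulation with mesh cubes and cubes $Q(x,R)$, there is $C>0$ such that for every closed dyadic cube $Q$ of side $2^{-j}$ meeting $E$, the number of closed dyadic subcubes of side $2^{-j-k}$ meeting $E\cap Q$ is at most $C2^{ks'}$. This uses the fact that $Q$ lies inside a cube of comparable size centred at a point of $E$.

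\textbf{Choice of $k$ and $N$.} Choose $k$ large and an integer $N$ with
\[
C2^{ks'}\le N\le 2^{kn},\qquad s<\frac{\log N}{k\log 2}<s+\varepsilon .
\]
This is possible for large $k$ because $s'<\min\{n,s+\varepsilon\}$: take $N=\lfloor 2^{k t_0}\rfloor$ for a fixed $t_0\in(s',\min\{n,s+\varepsilon\})$. Put $t=\log N/(k\log2)$.

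\textbf{Tree construction.} Inductively define families $\mathcal Q_j$ of closed dyadic cubes of side $2^{-jk}$, starting from $\mathcal Q_0=\{[0,1]^n\}$. For each $Q\in\mathcal Q_j$, select exactly $N$ of its $2^{kn}$ children of side $2^{-(j+1)k}$, always including every child that meets $E$. This is possible since there are at most $C2^{ks'}\le N$ such children, and there are enough children to fill the remaining slots. By induction every cube meeting $E$ is kept at every stage, so
\[
F=\bigcap_j\bigcup_{Q\in\mathcal Q_j}Q
\]
is compact and contains $E$.

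\textbf{The measure.} Define $\nu$ as the weak limit of the measures giving mass $N^{-j}$ uniformly to each $Q\in\mathcal Q_j$. To avoid double counting on shared boundaries, use half-open cubes when assigning mass. Then $\nu$ is a probability measure with $\nu(Q)\asymp N^{-j}=2^{-jkt}$ for $Q\in\mathcal Q_j$; the comparison for closed cubes is only up to the bounded number of adjacent selected cubes. Every point of $F$ lies in selected cubes of arbitrarily small size, each of positive mass, so $\operatorname{supp}\nu=F$.

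\textbf{Regularity.} Given $x\in F$ and $0<r<|F|$, choose $j$ with $\sqrt n\,2^{-jk}\le r<\sqrt n\,2^{-(j-1)k}$.
\begin{itemize}
\item For the lower bound, $B(x,r)$ contains the cube of $\mathcal Q_j$ containing $x$, so $\nu(B(x,r))\gtrsim 2^{-jkt}\gtrsim r^t$.
\item For the upper bound, $B(x,r)$ meets at most $c(n)2^{kn}$ cubes of $\mathcal Q_{j-1}$, so $\nu(B(x,r))\lesssim 2^{kn}N^{-(j-1)}\lesssim r^t$.
\end{itemize}
The constants depend on $k,n$, which are now fixed. Radii comparable to $|F|$ only affect constants.

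\textbf{Main obstacle.} The step I expect to need the most care is the counting in the first step: passing from the ball-based definition of $\dim_A$ to a uniform bound on dyadic children of an arbitrary dyadic cube meeting $E$ (not centred at a point of $E$). A secondary point is checking that assigning mass with half-open cubes while defining $F$ with closed cubes causes no loss of support or regularity.
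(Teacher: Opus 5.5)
Your proposal is correct and follows the paper's construction: a dyadic tree with scale ratio $2^{k}$ (the paper's $2^{g}$) in which every retained cube has exactly $N$ retained children (the paper's $M$). All children meeting $E$ are kept by applying the Assouad bound in a cube of comparable size centred at a point of $E\cap Q$, and regularity of the uniform mass distribution follows by comparing balls with cubes one level up or down. The differences are cosmetic. Your choice $N=\lfloor 2^{kt_0}\rfloor$ puts $t$ strictly inside $(\dim_A E,\dim_A E+\varepsilon)$, whereas the paper's $M=\lceil C2^{t}2^{gt}\rceil$ only gives $t'\le \dim_A E+\varepsilon$. You also use closed cubes and a weak limit, where the paper uses half-open cubes and Falconer's mass distribution proposition.
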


\begin{proof}
Without loss of generality we may assume $E \subset [0,1)^n$. Let $s=\dim_A E\text{ and } \epsilon>0$ sufficiently small such that
$t:=s+\varepsilon/2<n$. For $m\in\mathbb{N}_0$, let $\mathcal{D}_m$ denote
the family of half-open dyadic subcubes of $[0,1)^n$ of side length
$2^{-m}$, and for a bounded set $A$ let $N_r(A)$ denote the number of
cubes in the mesh of side length $r$, oriented at the origin, that
intersect $A$. By the definition of the Assouad dimension, there exists
$C\geq 1$ such that for all $0<r<R$ and all $x\in E$,
\begin{equation}\label{eq:assouad_bound}
N_r\bigl(E \cap Q(x,R)\bigr)
\le C \left(\frac{R}{r}\right)^{t},
\end{equation}
where $Q(x,R)$ denotes the cube of side length $R$ centered at $x$.

Fix a positive integer $g$ large enough such that
\begin{equation}\label{def:gap}
    M := \left\lceil C\,2^{t}\,2^{gt} \right\rceil < 2^{gn}
    \qquad\text{and}\qquad
    \frac{t+1+\log_2 C}{g}\leq \frac{\varepsilon}{2}.
\end{equation}
Define $t':=\frac{\log_2 M}{g}.$ Since $C\geq 1$, we have $M\geq 2^{gt}$ and
$M\leq C\,2^t\,2^{gt}+1\leq 2C\,2^t\,2^{gt}$, whence
$$t\leq t'\leq t+\frac{t+1+\log_2 C}{g}\leq t+\frac{\varepsilon}{2}
=s+\varepsilon.$$
Set $m_k:=kg$ and $r_k:=2^{-m_k}$ for $k\geq 0$, and note that
\begin{equation}\label{eq:exact_scaling}
    M^{-k}=r_k^{\,t'}\qquad\text{for every }k\geq 0.
\end{equation}

\emph{Construction of $F$.} Let $\mathcal{F}_0:=[0,1)^n$. Suppose
that for some $k\geq 0$ we have constructed a family of dyadic cubes
$\mathcal{F}_k\subset\mathcal{D}_{m_k}$ such that
$E\subset F_k:=\bigcup_{Q\in\mathcal{F}_k}Q$. For each
$Q\in\mathcal{F}_k$, partition $Q$ into its $2^{gn}$ dyadic children in
$\mathcal{D}_{m_{k+1}}$, retain all children $Q'$ with
$E\cap Q'\neq\emptyset$, and then add further children of $Q$, chosen
arbitrarily, until exactly $M$ children of $Q$ have been selected. This
is always possible: on the one hand, if $Q\cap E\neq\emptyset$, pick
$x\in Q\cap E$; since $Q\subset Q(x,2r_k)$, inequality
\eqref{eq:assouad_bound} gives
\begin{equation}\label{ineq:children}
    N_{r_{k+1}}(E\cap Q)
    \leq N_{r_{k+1}}\bigl(E\cap Q(x,2r_k)\bigr)
    \leq C\,2^t\left(\frac{r_k}{r_{k+1}}\right)^{t}\leq M,
\end{equation}
so all children that intersects $E$ can be retained; on the other hand, by
\eqref{def:gap} we have $M<2^{gn}$, so there are always enough children
available to complete the selection. Let $\mathcal{F}_{k+1}$ be the
family of all selected children of all cubes of $\mathcal{F}_k$, and
$F_{k+1}:=\bigcup_{Q\in\mathcal{F}_{k+1}}Q$. Since every cube of
$\mathcal{D}_{m_{k+1}}$ meeting $E$ is a child of some cube of
$\mathcal{F}_k$ (because $E\subset F_k$), we get $E\subset F_{k+1}$, and
by construction $F_{k+1}\subset F_k$ and each cube of $\mathcal{F}_k$
contains exactly $M$ cubes of $\mathcal{F}_{k+1}$; in particular
$\#\mathcal{F}_k=M^k$. Define
\[
F := \bigcap_{k=0}^{\infty} \overline{F_k},
\]
a compact set containing $E$.

Now, Since every cube of $\mathcal{F}_k$ has exactly $M$
selected children in $\mathcal{F}_{k+1}$, setting
\[
\mu(Q):=M^{-k}\qquad \text{for every }Q\in\mathcal{F}_k
\]
defines a consistent mass distribution, which by \cite[Proposition 1.7]{falconer2013fractal} extends to a Borel
probability measure $\mu$ with $\operatorname{supp}\mu\subset F$.

Now, if we  choose $k\geq 0$ such that
$r_{k+1}\leq r<r_k$, since $B(x,r)\subset Q(x,2r_k)$ and a cube of side
length $2r_k$ meets at most $3^n$ cubes of the mesh $\mathcal{D}_{m_k}$,
each of which has $\mu$-mass at most $M^{-k}$, we obtain from
\eqref{eq:exact_scaling} and $r_k=2^g r_{k+1}\leq 2^g r$ that
$$\mu(B(x,r))\leq 3^n M^{-k}=3^n r_k^{\,t'}\leq 3^n\,2^{gn}\, r^{\,t'}.$$

For the lower bound, let $x\in F$ and $0<r\leq 1$, and choose $k\geq 0$
with $r_{k+1}\leq \frac{r}{2\sqrt n}<r_k$. Since
$x\in\overline{F_{k+1}}$, which is a finite union of closed cubes, there
exists $Q'\in\mathcal{F}_{k+1}$ with $x\in\overline{Q'}$; as $Q'$ has
diameter $\sqrt n\, r_{k+1}$, we get
$Q'\subset B(x,2\sqrt n\, r_{k+1})\subset B(x,r)$. Hence, using
\eqref{eq:exact_scaling}, $r_{k+1}>2^{-g}\frac{r}{2\sqrt n}$ and
$t'<n$,
$$\mu(B(x,r))\geq \mu(Q')=r_{k+1}^{\,t'}
\geq \left(\frac{2^{-g}}{2\sqrt n}\right)^{n} r^{\,t'}.$$
Taking $c:=\max\{3^n 2^{gn},\,(2^{g+1}\sqrt n)^{n}\}$ completes the
proof.
\end{proof}

The following lemma sharpens an estimate of Falconer and Mattila
\cite{falconer1996packing}, replacing the ambient dimension $n$ in the
exponent by the Assouad dimension of the support. The key new ingredient
is the $t$-regular measure provided by Lemma~\ref{Assouad Lemma}, which
plays the role of the $n$-dimensional Lebesgue measure in their argument.

\begin{lemma}\label{lem:assouad_measure_growth}
Let $\mu$ be a Borel probability measure on $\mathbb{R}^n$ with compact
support $E$ satisfying $0<\dim_A E$. Let $a\in(0,1)$ and $\varepsilon>0$,
and set $s:=\dim_A E$. Then for $\mu$-almost every $x\in E$ there exists
$\rho_0(x)>0$ such that
\begin{equation}\label{eq:ball_growth}
\mu\bigl(B(x,r)\bigr)
\le \left(\frac{4r}{\rho}\right)^{s(1+\varepsilon)}\mu\bigl(B(x,\rho)\bigr),
\end{equation}
for all $0<\rho<\rho_0(x)$ and $\rho^{a}\le r\le 1.$\end{lemma}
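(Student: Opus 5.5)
The plan is to run a Falconer--Mattila type ``bad set'' argument. The covering estimate at scale $\rho$ inside balls of radius $r$ will come directly from the Assouad bound for $E$, rather than from Lebesgue measure. We then conclude with Borel--Cantelli over dyadic scales.

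Throughout, fix $t$ with $s<t<s(1+\varepsilon/2)$. By the definition of $\dim_A E$ there is $C\ge1$ such that $N_\rho(E\cap B(y,R))\le C(R/\rho)^t$ for all $y\in E$ and $0<\rho<R$. Lemma~\ref{Assouad Lemma} could be used instead to replace this by a regular measure, but the covering-number form seems enough here.

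\emph{Step 1 (dyadic bad sets).} For dyadic scales $\rho=2^{-k}$ and $r=2^{-j}$ with $\rho^a\le r\le 1$ (so $j\le ak$), define
\[
\mathcal{B}_{j,k}=\Bigl\{x\in E:\ \mu(B(x,\rho))<\Bigl(\tfrac{\rho}{r}\Bigr)^{s(1+\varepsilon)}\mu(B(x,r))\Bigr\}.
\]
We aim to show that
\[
\mu(\mathcal{B}_{j,k})\le C'\,(\rho/r)^{s(1+\varepsilon)-t}.
\]
To see this, take a maximal $r$-separated set $\{y_i\}\subset E$. The balls $B(y_i,2r)$ have overlap bounded by a constant $c_n$, and the balls $B(y_i,r)$ cover $E$.

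Fix $i$ and cover $E\cap B(y_i,r)$ by at most $C(r/\rho)^t$ balls of radius $\rho/2$. For each such ball $B'$ that meets $\mathcal{B}_{j,k}$, pick $x_0\in B'\cap\mathcal{B}_{j,k}$. Then $B'\subset B(x_0,\rho)$ and $B(x_0,r)\subset B(y_i,2r)$, so
\[
\mu(B')\le \mu(B(x_0,\rho))\le (\rho/r)^{s(1+\varepsilon)}\mu(B(y_i,2r)).
\]
Summing over the at most $C(r/\rho)^t$ balls gives
\[
\mu(\mathcal{B}_{j,k}\cap B(y_i,r))\le C(\rho/r)^{s(1+\varepsilon)-t}\mu(B(y_i,2r)).
\]
Summing over $i$ and using the bounded overlap yields the claim with $C'=c_nC$.

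\emph{Step 2 (Borel--Cantelli).} Since $r\ge\rho^a$, we have $\rho/r\le\rho^{1-a}=2^{-k(1-a)}$. Also $s(1+\varepsilon)-t>s\varepsilon/2>0$; this is where $s=\dim_A E>0$ is used. Therefore
\[
\sum_k\sum_{0\le j\le ak}\mu(\mathcal{B}_{j,k})\le C'\sum_k (ak+1)\,2^{-k(1-a)s\varepsilon/2}<\infty.
\]
So $\mu$-a.e.\ $x$ lies in only finitely many $\mathcal{B}_{j,k}$. For such $x$ there is $k_0(x)$ such that for all $k\ge k_0$ and all $0\le j\le ak$,
\[
\mu(B(x,2^{-j}))\le (2^{k-j})^{s(1+\varepsilon)}\mu(B(x,2^{-k})).
\]

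\emph{Step 3 (from dyadic to arbitrary scales).} Set $\rho_0(x)=2^{-k_0(x)}$, possibly shrunk slightly. Given $\rho<\rho_0$ and $\rho^a\le r\le 1$, choose dyadic $\rho'=2^{-k}$ with $\rho'\le\rho<2\rho'$, and dyadic $r'=2^{-j}$ with $r\le r'<2r$ (when $r$ is close to $1$, use $r'=1$, which gives $j=0$). Then $k\ge k_0$ and $r'\ge r\ge\rho^a\ge\rho'^a$, so $j\le ak$ and Step 2 applies. Monotonicity gives
\[
\mu(B(x,r))\le\mu(B(x,r'))\le (r'/\rho')^{s(1+\varepsilon)}\mu(B(x,\rho'))\le (4r/\rho)^{s(1+\varepsilon)}\mu(B(x,\rho)),
\]
using $r'/\rho'\le 2r\cdot 2/\rho$.

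\emph{Main obstacle.} The delicate step is Step 1, namely transferring the bad condition at $x$ to a bound on a covering ball. This needs three things:
\begin{itemize}
\item the covering balls have radius $\rho/2$, so that $B'\subset B(x_0,\rho)$;
\item $\mu(B(x_0,r))$ is controlled by the enlarged ball $\mu(B(y_i,2r))$;
\item the Assouad bound is applied inside $B(y_i,r)$, which requires $y_i\in E$.
\end{itemize}
One also has to check that the bounded overlap of the $B(y_i,2r)$ depends only on $n$. It is exactly the use of $N_\rho$ in place of the Lebesgue count $(r/\rho)^n$ that turns the exponent $n$ into $s$.
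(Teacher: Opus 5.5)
Your argument is correct, and it takes a genuinely different route from the paper.

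\textbf{The paper's route.} The paper first proves Lemma~\ref{Assouad Lemma}, which embeds $E$ in a compact set $F$ carrying a $t$-regular measure $\nu$ with $t$ slightly above $s$. It then runs the Falconer--Mattila argument with $\nu$ in place of Lebesgue measure. The lower regularity bound of $\nu$ at points of $E$ gives $\mu(A_{p,q})\lesssim 2^{qt}\int_{A_{p,q}}\nu(B(x,2^{-q-1}))\,d\mu(x)$. Two applications of Fubini and the upper regularity bound then yield $\mu(A_{p,q})\lesssim 2^{-(q-p)t\eta}$. The case $s=n$ must be delegated to Falconer and Mattila, because the construction of $F$ needs $\dim_A E<n$.

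\textbf{Your route.} You replace the auxiliary measure and the double Fubini by their discrete counterpart. You use a maximal $r$-separated net $\{y_i\}\subset E$, whose doubled balls overlap at most $5^n$ times, together with the Assouad covering bound $N_{\rho/2}(E\cap B(y_i,r))\le C2^t(r/\rho)^t$ at centres $y_i\in E$. This covering count is exactly the information that the regularity of $\nu$ encodes.

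\textbf{Comparison.} Your approach is more elementary and self-contained. It treats all $0<s\le n$ uniformly, with no separate case $s=n$, and it makes Lemma~\ref{Assouad Lemma} unnecessary for this purpose. The paper's approach keeps the structure of the original Falconer--Mattila proof, and it produces the regular-superset lemma as a by-product of independent interest.

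\textbf{Two small repairs in Step 1.}
\begin{itemize}
\item Covering by balls of radius $\rho/2$ costs an extra factor $2^t$ in the count.
\item You should sum only over covering balls $B'$ meeting $\mathcal{B}_{j,k}\cap B(y_i,r)$, and pick $x_0$ in that intersection. Otherwise $x_0$ need not lie in $B(y_i,r)$, and the inclusion $B(x_0,r)\subset B(y_i,2r)$ can fail. Alternatively, enlarge to $B(y_i,3r)$, which still has bounded overlap.
\end{itemize}
Neither repair affects the conclusion. Steps 2 and 3 (Borel--Cantelli over dyadic pairs with $j\le ak$, then passing to arbitrary scales with the factor $4$) are correct as written.
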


\begin{proof}
If $s=n$ this is the result of \cite{falconer1996packing}, so assume
$s<n$. Fix auxiliary parameters $\delta>0$ and $\eta>0$, to be chosen at
the end so that $t(1+\eta)\leq s(1+\varepsilon)$, where
$t\in(s,s+\delta)$ is the exponent provided by
Lemma~\ref{Assouad Lemma}; let $\nu$ be the corresponding $t$-regular
probability measure, supported on a compact set $F\supset E$, with
regularity constant $c\geq 1$:
\begin{equation}\label{eq:nu_regular}
c^{-1}r^{t}\leq \nu(B(y,r))\leq c\,r^{t}
\qquad\text{for all }y\in F,\ 0<r\leq 1.
\end{equation}
Note two consequences of \eqref{eq:nu_regular}: the lower bound applies
at $\mu$-a.e.\ point, since $\operatorname{supp}\mu=E\subset F$; and the
upper bound extends to arbitrary centers $y\in\mathbb{R}^n$ at the cost
of a factor $2^t$, since if $B(y,r)\cap F\neq\emptyset$ then
$B(y,r)\subset B(y',2r)$ for some $y'\in F$, whence
$\nu(B(y,r))\leq c\,2^{t}r^{t}$ for all $y$ and $0<r\leq 1/2$.

For nonnegative integers $p<q$ define
$$A_{p,q}:=\bigl\{x\in E:\ \mu(B(x,2^{-p}))>2^{(q-p)t(1+\eta)}\,\mu(B(x,2^{-q}))\bigr\}.$$
We claim that
\begin{equation}\label{eq:dyadic_bound}
\mu(A_{p,q})\leq 8^{t}c^{2}\,2^{-(q-p)t\eta}.
\end{equation}
First observe that for every $x\in\mathbb{R}^n$,
\begin{equation}\label{eq:local_bound}
\mu\bigl(A_{p,q}\cap B(x,2^{-q-1})\bigr)\leq
2^{(p-q)t(1+\eta)}\,\mu\bigl(B(x,2^{-p+1})\bigr).
\end{equation}
Indeed, if the left-hand side is zero there is nothing to prove;
otherwise pick $y\in A_{p,q}\cap B(x,2^{-q-1})$. Any two points of
$B(x,2^{-q-1})$ lie within distance $2^{-q}$ of each other, so
$A_{p,q}\cap B(x,2^{-q-1})\subset B(y,2^{-q})$, and since $y\in A_{p,q}$
means $\mu(B(y,2^{-q}))<2^{(p-q)t(1+\eta)}\mu(B(y,2^{-p}))$, we get

\begin{align*}
    \mu\bigl(A_{p,q}\cap B(x,2^{-q-1})\bigr)&\leq\mu(B(y,2^{-q}))\\
    &<2^{(p-q)t(1+\eta)}\mu(B(y,2^{-p}))\\
    &\leq 2^{(p-q)t(1+\eta)}\mu(B(x,2^{-p+1})).
\end{align*}

Now we estimate $\mu(A_{p,q})$. By the lower bound in
\eqref{eq:nu_regular}, valid at $\mu$-a.e.\ $x$,
$$\mu(A_{p,q})\leq c\,2^{(q+1)t}\int_{A_{p,q}}\nu\bigl(B(x,2^{-q-1})\bigr)\,d\mu(x).$$
Writing $\nu(B(x,2^{-q-1}))=\int\mathbbm{1}_{\{|x-y|\leq 2^{-q-1}\}}\,d\nu(y)$
and applying Fubini's theorem,

\begin{align*}
\int_{A_{p,q}}\nu\bigl(B(x,2^{-q-1})\bigr)\,d\mu(x)
&=\int \mu\bigl(A_{p,q}\cap B(y,2^{-q-1})\bigr)\,d\nu(y)\\
&\leq 2^{(p-q)t(1+\eta)}\int\mu\bigl(B(y,2^{-p+1})\bigr)\,d\nu(y),    
\end{align*}

by \eqref{eq:local_bound}. Applying Fubini once more and then the upper bound in \eqref{eq:nu_regular},
$$\int\mu\bigl(B(y,2^{-p+1})\bigr)\,d\nu(y)
=\int\nu\bigl(B(x,2^{-p+1})\bigr)\,d\mu(x)
\leq c\,2^{t}\,2^{-(p-1)t}.$$
Combining,
$$\mu(A_{p,q})\leq c\,2^{(q+1)t}\, 2^{(p-q)t(1+\eta)}\, c\,4^{t}\,2^{-pt}
= c^{2}\,2^{t}4^{t}\,2^{-(q-p)t\eta}\leq 8^{t}c^{2}\,2^{-(q-p)t\eta},$$
which proves \eqref{eq:dyadic_bound}.

Now, for $q_0\in\mathbb{N}$ let
$$
M_{q_0}:=\mu\left\{\begin{array}{l}
x : \mu(B(x,2^{-p})) > (2^{q-p})^{t(1+\epsilon)}\,\mu(B(x,2^{-q})) \\[4pt]
\text{for some integers } p \text{ and } q \text{ with } 0\leq p\leq aq \text{ and } q\geq q_0
\end{array}
\right\}$$

By \eqref{eq:dyadic_bound},
$$M_{q_0}\leq 8^{t}c^{2}\sum_{q=q_0}^{\infty}\sum_{p=0}^{\lfloor aq\rfloor}
2^{-(q-p)t\eta}
\leq 8^{t}c^{2}\,\frac{2^{t\eta}}{2^{t\eta}-1}\sum_{q=q_0}^{\infty}2^{-q(1-a)t\eta}
=c_{1}\,2^{-q_{0}(1-a)t\eta},$$
where $c_{1}$ depends only on $a,\eta,t$ and the regularity constant $c$
of $\nu$; here we used $q-p\geq q-aq=(1-a)q$ and summed the geometric
series in $p$.

Finally, let $\rho_{0}:=2^{-q_{0}-1}$,
and suppose $0<\rho<\rho_{0}$, $\rho^{a}\leq r\leq 1$ and
$$\mu(B(x,r))>\left(\frac{4r}{\rho}\right)^{t(1+\eta)}\mu(B(x,\rho)).$$

Choose integers $p,q$ with $2^{-p-1}<r\leq 2^{-p}$ and
$2^{-q}\leq\rho<2^{-q+1}$. Then $2^{-p}\geq r\geq\rho^{a}\geq 2^{-aq}$
gives $0\leq p\leq aq$, and $2^{-q}\leq\rho<2^{-q_{0}-1}$ gives
$q>q_{0}$. Moreover $\frac{4r}{\rho}\geq\frac{4\cdot 2^{-p-1}}{2^{-q+1}}
=2^{q-p}$, so

\begin{align*}
    2^{(q-p)t(1+\eta)}\mu(B(x,2^{-q}))&\leq \left(\tfrac{4r}{\rho}\right)^{t(1+\eta)}\mu(B(x,\rho))\\
    &<\mu(B(x,r))\\
    &\leq \mu(B(x,2^{-p})).
\end{align*}

Hence the set of $x$ for which \eqref{eq:ball_growth} fails, with
$t(1+\eta)$ in place of $s(1+\varepsilon)$, for some
$0<\rho<\rho_{0}$ and $\rho^{a}\leq r\leq 1$, has $\mu$-measure at most
$M_{q_{0}}\leq c_{1}2^{-q_{0}(1-a)t\eta}$.

The exceptional sets above decrease as
$q_{0}\to\infty$ and their measures tend to $0$; hence their
intersection is $\mu$-null, which means that for $\mu$-a.e.\ $x$ there
exists $\rho_{0}(x)>0$ such that
$$\mu(B(x,r))\leq\left(\frac{4r}{\rho}\right)^{t(1+\eta)}\mu(B(x,\rho)),$$
for all $0<\rho<\rho_{0}(x),\ \rho^{a}\leq r\leq 1$.
Finally, given $\varepsilon>0$, choose $\delta,\eta>0$ small enough that
$t(1+\eta)\leq (s+\delta)(1+\eta)\leq s(1+\varepsilon)$ and  \eqref{eq:ball_growth} follows.
\end{proof}

Combining Lemma~\ref{lem:assouad_measure_growth} with the definition of packing dimension of a measure, we obtain an
upper bound for the measure of balls along suitable windows of scales.

\begin{lemma}\label{Lemma 3.9}
Let $\mu$ be a Borel probability measure with compact support $E \subset \mathbb{R}^n$. 
Let $\theta, a \in (0,1)$, $\varepsilon>0$ and $ p \in (0,\dim_P \mu)$. Then for all $c>0$ we have that for $\mu$-almost every $x \in \mathbb{R}^n$ there exists a sequence $\delta_k \to 0$ satisfying
\[
\mu(B(x,r)) \leq c\, r^{\dim_A E(1+\varepsilon)-\frac{\dim_A E(1+\varepsilon)-p}{a\theta}},
\]
whenever $r \in [\delta_k^a,\delta_k^{a\theta}]$.
\end{lemma}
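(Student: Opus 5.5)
The plan is to combine the local characterization of $\dim_P\mu$ from the Preliminaries (the one in terms of $\mu(B(x,r))$, \cite[Lemma 4.1]{hu1994fractal}) with the growth estimate of Lemma~\ref{lem:assouad_measure_growth}, taking the small radius $\rho$ there to be a scale at which $\mu(B(x,\cdot))$ is small. Write $s:=\dim_A E$ and $s':=s(1+\varepsilon)$.

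\emph{The case $s=0$.} Here $\dim_P\mu\le\dim_A E=0$, so no $p\in(0,\dim_P\mu)$ exists and the statement is vacuous. Assume from now on that $s>0$, so that Lemma~\ref{lem:assouad_measure_growth} applies.

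\emph{Step 1: a full-measure set of good points.} Fix $c>0$. Since $p<\dim_P\mu$, the characterization of $\dim_P\mu$ gives
\[
\liminf_{r\to0}\frac{\mu(B(x,r))}{r^{p}}=0 \qquad\text{for $\mu$-a.e.\ }x.
\]
Lemma~\ref{lem:assouad_measure_growth}, applied with the given $a$ and $\varepsilon$, gives a $\mu$-full set of points $x$ admitting some $\rho_0(x)>0$. I work on the intersection of these two full-measure sets.

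\emph{Step 2: choosing the scales $\delta_k$.} For such an $x$, choose $\delta_k\to0$ with $\delta_k<\min\{\rho_0(x),1\}$ and
\[
\mu(B(x,\delta_k))\le c\,4^{-s'}\delta_k^{\,p}.
\]
The liminf being zero makes this possible for any positive constant in place of $c\,4^{-s'}$.

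\emph{Step 3: the estimate on the window.} Let $r\in[\delta_k^a,\delta_k^{a\theta}]$. Then $\delta_k^a\le r\le\delta_k^{a\theta}\le1$, so Lemma~\ref{lem:assouad_measure_growth} with $\rho=\delta_k$ yields
\[
\mu(B(x,r))\le\Bigl(\frac{4r}{\delta_k}\Bigr)^{s'}\mu(B(x,\delta_k))\le c\,r^{s'}\delta_k^{\,p-s'}.
\]
Since $p<\dim_P\mu\le s\le s'$, the exponent $p-s'$ is negative. Moreover $r\le\delta_k^{a\theta}$ means $\delta_k\ge r^{1/(a\theta)}$. Together these give $\delta_k^{\,p-s'}\le r^{(p-s')/(a\theta)}$, and hence
\[
\mu(B(x,r))\le c\,r^{s'-\frac{s'-p}{a\theta}},
\]
which is exactly the claimed bound.

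The only delicate point is the direction of the inequality in Step 3. It relies on the sign of $p-s'$, which follows from $\dim_P\mu\le\dim_A E$, and on converting the upper endpoint $\delta_k^{a\theta}$ of the window into a lower bound for $\delta_k$ in terms of $r$. The lower endpoint $\delta_k^a$ is needed only to meet the hypothesis $\rho^a\le r$ of Lemma~\ref{lem:assouad_measure_growth}.
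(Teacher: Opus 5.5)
Your proof is correct and follows essentially the same route as the paper: choose $\delta_k$ from the $\liminf$ characterization of $\dim_P\mu$ with constant $4^{-s(1+\varepsilon)}c$, apply Lemma~\ref{lem:assouad_measure_growth} with $\rho=\delta_k$, and then turn $\delta_k^{\,p-s(1+\varepsilon)}$ into a power of $r$ using $r\le\delta_k^{a\theta}$ and the negativity of the exponent. You also handle explicitly the vacuous case $\dim_A E=0$ and the requirement $\delta_k<1$ (so that $\delta_k^{a\theta}\le 1$), both of which the paper leaves implicit.
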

\begin{proof}
Write $s:=\dim_A E$ and note that $0<p\leq\dim_P\mu\leq s$, so that
$p-s(1+\varepsilon)<0$.

Let $c>0$ and set $c':=4^{-s(1+\varepsilon)}c$. By the characterization
of the packing dimension of a measure (see
\cite[Lemma 4.1]{hu1994fractal}), for $\mu$-almost every
$x\in\mathbb{R}^n$ we have
$\liminf_{\delta\to 0}\delta^{-p}\mu(B(x,\delta))=0$; in particular,
there exists a sequence $\delta_k\to 0$ (depending on $x$) such that
\[
\mu\bigl(B(x,\delta_k)\bigr)\leq c'\,\delta_k^{\,p}
\qquad\text{for every }k.
\]
On the other hand, by Lemma~\ref{lem:assouad_measure_growth}, for
$\mu$-almost every $x$ there exists $\delta_0(x)>0$ such that for all
$\delta\in(0,\delta_0(x))$ and all $r\in[\delta^a,1]$,
\[
\left(\frac{\delta}{4r}\right)^{s(1+\varepsilon)}
\leq \frac{\mu(B(x,\delta))}{\mu(B(x,r))}.
\]
Fix $x$ in the intersection of both full-measure sets and discard the
finitely many $k$ with $\delta_k\geq\delta_0(x)$. Combining the two
estimates, for all $r\in[\delta_k^{\,a},1]$,
\[
\mu\bigl(B(x,r)\bigr)
\leq (4r)^{s(1+\varepsilon)}\,\delta_k^{-s(1+\varepsilon)}\,
\mu\bigl(B(x,\delta_k)\bigr)
\leq c\;\delta_k^{\,p-s(1+\varepsilon)}\,r^{\,s(1+\varepsilon)}.
\]
In particular, if $r\in[\delta_k^{\,a},\,\delta_k^{\,a\theta}]$, then
$r^{1/(a\theta)}\leq\delta_k$, and since $p-s(1+\varepsilon)<0$,
\[
\delta_k^{\,p-s(1+\varepsilon)}\leq
r^{-\frac{s(1+\varepsilon)-p}{a\theta}},
\]
whence $\mu(B(x,r))\leq c\,
r^{\,s(1+\varepsilon)-\frac{s(1+\varepsilon)-p}{a\theta}}$.
\end{proof}

\section{Packing intermediate dimensions}

In this section we introduce the packing intermediate dimensions and the
threshold parameter $D(\mu)$ of a Borel probability measure, and
establish the inequality needed for the main theorem.

\begin{definition}\label{def:intermediate_profile}
Let $\mu$ be a Borel probability measure with compact support on
$\mathbb{R}^n$ and let $\theta\in(0,1]$. For $x\in\mathbb{R}^n$,
$r>0$ and $0\leq t \leq s$, write
$$F_{t,s,\theta}^{\mu}(x,r)=\int\min\{1,\ r^t|x-y|^{-t},\
r^{\theta(s-t)+t}|x-y|^{-s}\}\,d\mu(y),$$
and define
$$\dim_{P,\theta}\mu = \sup\left\{t\in[0,n]\ :\ \liminf_{r\to 0}
r^{-t}F_{t,n,\theta}^{\mu}(x,r)=0\ \text{ for } \mu\text{-a.e.\ }
x\in\mathbb{R}^{n}\right\}.$$
\end{definition}

\begin{definition}\label{maindefinition}
Let $\mu$ be a Borel probability measure with compact support on
$\mathbb{R}^n$. The \textit{threshold parameter} of $\mu$ is
$$D(\mu) := \lim_{\theta \to 0}\, \dim_{P,\theta}\mu.$$
\end{definition}

\begin{remark}\label{rem:monotone_theta}
The function $\theta\mapsto\dim_{P,\theta}\mu$ is
nondecreasing on $(0,1]$. In particular, the limit in
Definition~\ref{maindefinition} exists and we have 
$$D(\mu)=\inf_{\theta\in(0,1]}\dim_{P,\theta}\mu.$$
\end{remark}
\begin{remark}\label{rem:downward}
Let $0\leq t'\leq t\leq n$ and $\theta\in(0,1]$. Comparing we have 
$$r^{-t'}F_{t',n,\theta}^{\mu}(x,r)\leq r^{(t-t')\theta}\,
r^{-t}F_{t,n,\theta}^{\mu}(x,r).$$ Consequently, 
$$\liminf_{r\to0}r^{-t}F_{t,n,\theta}^{\mu}(x,r)=0,$$
for $\mu\text{-a.e. }x$, for every
$t<\dim_{P,\theta}\mu.$
\end{remark}
\begin{remark}\label{rem:theta_one}
For any $t\in [0,n]$, we have
$F_{t,n,1}^{\mu}(x,r)=F_n^\mu(x,r)$ and therefore
$$\dim_P \mu = \dim_{P,1} \mu.$$
\end{remark}

For simplicity in the notation, for $\theta\in(0,1]$ and $0\leq t\leq s$, we will write
$$\phi_{r,\theta}^{t,s}(z)=
\begin{cases}
1, & \text{if } |z|\leq r, \\[1mm]
r^{t}|z|^{-t}, & \text{if } r< |z| \leq r^{\theta}, \\[1mm]
r^{\theta(s-t)+t}|z|^{-s}, & \text{if } |z| > r^{\theta},
\end{cases}$$
and therefore $F_{t,s,\theta}^{\mu}(x,r)=\int \phi_{r,\theta}^{t,s}(x-y)\,d\mu(y)$.

We now give an equivalent characterization of the packing intermediate
dimension that dispenses with the kernels, $\dim_{P,\theta}\mu$ is the
critical exponent for Frostman-type estimates of $\mu(B(x,r))$ along
windows of scales of the form $[\delta^{1/\theta},\delta]$. 

The characterization is
the packing-dimension analogue of the capacity-theoretic description of
the intermediate dimensions of measures obtained in
\cite[Theorem 5.6]{ANGELINI2026130039}, and the proof follows a similar
strategy. Indeed, the condition on the right-hand side of
\eqref{eq:newdef-formula} involves the same two-scale Frostman-type
condition used in \cite[Definition 3.1]{ANGELINI2026130039} to define
the Minkowski intermediate dimensions of a measure. The difference lies
in the quantifiers: there, the constant and the scales are required to
be uniform over the support (see also
\cite[Lemma 3.3]{ANGELINI2026130039}), whereas in
\eqref{eq:newdef-formula} the scales $\delta(x)$ are allowed to depend
on the point $x$. We also note that the kernels here depend on the
ambient dimension $n$ rather than on the Assouad dimension of $\mu$, so
no regularity hypothesis is required beyond compact support, whereas
the characterization in \cite{ANGELINI2026130039} requires
$\dim_A\mu<\infty$.

\begin{theorem}\label{newdef}Let $\mu$ be a Borel probability measure on $\mathbb{R}^n$ with compact support and let $\theta\in(0,1]$. Then\begin{equation}\label{eq:newdef-formula}    \dim_{P,\theta} \mu = \sup\left\{\begin{array}{c}s\geq 0 \, : \, \forall\, \epsilon>0 \text{ and } \delta_0>0\ \exists\, \delta(x)<\delta_0\ \text{such that} \\[2mm]r^{-s}\mu(B(x,r))<\epsilon\ \ \forall\, r\in[\delta(x)^{1/\theta},\,\delta(x)],\ \ \text{ for } \mu\text{-a.e.\ } x\end{array}\right\}.\end{equation} \end{theorem}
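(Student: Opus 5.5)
Denote by $\Delta$ the right-hand side of \eqref{eq:newdef-formula}. We prove the two inequalities $\dim_{P,\theta}\mu\le\Delta$ and $\Delta\le\dim_{P,\theta}\mu$ separately. Both rest on comparing the kernel $\phi_{r,\theta}^{t,n}$ with the ball masses $\mu(B(x,\rho))$ over the window of radii between $r$ and $r^{\theta}$. We will use the substitution $\delta=r^{\theta}$, so that $[\delta^{1/\theta},\delta]=[r,r^\theta]$. Throughout, the quantifier ``for every $\epsilon>0$ and $\delta_0>0$ there is $\delta(x)<\delta_0$'' is equivalent to a $\liminf$ statement along a sequence of scales depending on $x$.

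\emph{Step 1: $\Delta\ge\dim_{P,\theta}\mu$.} Fix $s<t<\dim_{P,\theta}\mu$. By Remark~\ref{rem:downward}, for $\mu$-a.e.\ $x$ we have $\liminf_{r\to0}r^{-t}F^\mu_{t,n,\theta}(x,r)=0$. For $\rho\in[r,r^\theta]$ we have $\phi_{r,\theta}^{t,n}\ge r^t\rho^{-t}$ on $B(x,\rho)$. Hence
\[
\mu(B(x,\rho))\le r^{-t}\rho^{t}F^\mu_{t,n,\theta}(x,r),
\]
so
\[
\rho^{-s}\mu(B(x,\rho))\le \rho^{t-s}\,r^{-t}F^\mu_{t,n,\theta}(x,r)\le r^{-t}F^\mu_{t,n,\theta}(x,r).
\]
Choosing $r$ along the $\liminf$ sequence and setting $\delta(x)=r^\theta$ gives the condition in \eqref{eq:newdef-formula} for $s$. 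Hence $s\le\Delta$, and letting $s\uparrow\dim_{P,\theta}\mu$ gives the inequality. This direction is routine.

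\emph{Step 2: $\Delta\le\dim_{P,\theta}\mu$.} Take $s$ in the set defining $\Delta$ and $t<s$. We show $\liminf_{r\to0} r^{-t}F^\mu_{t,n,\theta}(x,r)=0$ a.e. For a.e.\ $x$, take $\epsilon_k\to0$, the corresponding $\delta_k\to0$, and set $r_k=\delta_k^{1/\theta}$. Split the integral defining $F$ into three regions.

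On $|x-y|\le r_k$ the contribution is $\mu(B(x,r_k))\le\epsilon_k r_k^{s}$, so after multiplying by $r_k^{-t}$ it tends to $0$.

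On the annuli $r_k<|x-y|\le r_k^\theta$ we decompose dyadically, $2^{j}r_k<|x-y|\le 2^{j+1}r_k$. Each annulus contributes at most
\[
r_k^t(2^jr_k)^{-t}\,\epsilon_k(2^{j+1}r_k)^{s}\lesssim \epsilon_k r_k^{s}\,2^{j(s-t)}.
\]
We sum over the roughly $\log_2 r_k^{\theta-1}$ annuli. The sum is dominated by the top term, which gives $\lesssim\epsilon_k r_k^{\theta s}r_k^{t(1-\theta)}$. Multiplied by $r_k^{-t}$ this is $\epsilon_k r_k^{\theta(s-t)}\to0$. Note that the sum is geometric because $s>t$. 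If instead $s\le t$ the bound is even easier, although we only need $t<s$.

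\emph{Step 3: the outer region $|x-y|>r_k^\theta$.} This is the main obstacle, because no information on $\mu$ is available at scales above $\delta_k$. The kernel there is $r^{\theta(n-t)+t}|x-y|^{-n}$, with the ambient exponent $n$. We again use dyadic annuli $2^j\delta_k<|x-y|\le2^{j+1}\delta_k$ and the trivial bound $\mu(B(x,\rho))\le 1$. Since the exponent $n$ makes the sum converge, the contribution is $\lesssim r_k^{\theta(n-t)+t}\delta_k^{-n}=r_k^{t}\,r_k^{-\theta t}$. Multiplied by $r_k^{-t}$ this gives $r_k^{-\theta t}$, which does not tend to $0$. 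So the trivial bound fails, and we must instead use the estimate at scale $\delta_k$ itself, $\mu(B(x,\delta_k))\le\epsilon_k\delta_k^{s}$, combined with the doubling-free observation that the first annulus is controlled by this ball.

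The cleaner route we would try first is to note that $\mu(B(x,\rho))\le \min\{1,\,\epsilon_k\delta_k^{s}(\rho/\delta_k)^{n}\}$ fails in general, since it would require doubling. Instead we replace $t$ by a slightly smaller exponent and use the freedom in $\theta$ via Remark~\ref{rem:downward}. Concretely, the outer term equals $r^{\theta(n-t)+t}\int_{|x-y|>\delta}|x-y|^{-n}d\mu$. We bound it by summing $\mu(B(x,2^{j+1}\delta))(2^j\delta)^{-n}$, using $\mu(B(x,2^{j+1}\delta))\le 1$ for large $j$ and the window bound for the first term. We then check whether the exponent bookkeeping closes. If it does not close, we would switch to Step 2 carried out with $\delta(x)$ replaced by a scale slightly larger than the $\delta_k$ obtained from the hypothesis, exploiting that the hypothesis holds for every $\epsilon$ and $\delta_0$. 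We expect this tail estimate to be where the real work lies.
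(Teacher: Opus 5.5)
Your Step 1 is correct and is the same argument the paper uses for $\dim_{P,\theta}\mu\le\Delta$. In Step 2, your bounds for $|x-y|\le r_k$ and $r_k<|x-y|\le r_k^{\theta}$ are fine (cap the top annulus at radius $r_k^\theta$).

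\textbf{The gap is Step 3.} You leave it open, and none of your suggested fixes can close it. The window hypothesis says nothing about $\mu(B(x,u))$ for $u>\delta_k$. Using only $\mu\le 1$, the best tail bound is $r_k^{\theta(n-t)+t}\delta_k^{-n}=r_k^{t(1-\theta)}$, i.e.\ $r_k^{-\theta t}$ times $r_k^t$. The fixes fail as follows:
\begin{itemize}
\item Lowering $t$ to $t'$ still leaves a factor $r_k^{-\theta t'}\to\infty$.
\item $\theta$ is fixed in the statement, and changing it also changes the windows.
\item Passing to a larger scale only moves the region where you have no information.
\end{itemize}
\emph{What you are missing is a growth estimate valid at $\mu$-almost every point.} This is the Falconer--Mattila lemma, i.e.\ Lemma~\ref{lem:assouad_measure_growth} with exponent $n$ (legitimate since $\dim_A(\supp\mu)\le n$). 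It says: for $a\in(0,1)$, $\eta>0$ and $\mu$-a.e.\ $x$ there is $\rho_0(x)>0$ with $\mu(B(x,u))\le(4u/\rho)^{n(1+\eta)}\mu(B(x,\rho))$ whenever $0<\rho<\rho_0(x)$ and $\rho^a\le u\le 1$. You are right that genuine doubling fails. But this a.e.\ substitute loses only the small parameters $1-a$ and $\eta$, and that is exactly what the tail needs.

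\textbf{With this lemma your direct route closes.} Take $\rho=\delta_k$, discarding the finitely many $k$ with $\delta_k\ge\rho_0(x)$. Since $r_k^{\theta(n-t)+t}=r_k^{t}\delta_k^{n-t}$, the tail is at most $r_k^t\delta_k^{n-t}\,n\int_{\delta_k}^{\infty}u^{-n-1}\mu(B(x,u))\,du$. Bound $\mu(B(x,u))$ in three ranges:
\begin{itemize}
\item on $[\delta_k,\delta_k^a]$, $\mu(B(x,u))\le\mu(B(x,\delta_k^a))\le 4^{n(1+\eta)}\epsilon_k\delta_k^{s-(1-a)n(1+\eta)}$;
\item on $[\delta_k^a,1]$, $\mu(B(x,u))\le 4^{n(1+\eta)}\epsilon_k u^{n(1+\eta)}\delta_k^{s-n(1+\eta)}$;
\item for $u>1$, $\mu(B(x,u))\le 1$.
\end{itemize}
Then $r_k^{-t}$ times the tail is at most $4^{n(1+\eta)}\epsilon_k\bigl(\delta_k^{s-t-(1-a)n(1+\eta)}+\eta^{-1}\delta_k^{s-t-n\eta}\bigr)+\delta_k^{n-t}$. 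For $t<n$ this tends to $0$ once $(1-a)n(1+\eta)\le s-t$ and $n\eta\le s-t$. This is essentially the $F_3$ estimate in the proof of Lemma~\ref{lem:threshold_lower_bound}.

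\textbf{The paper argues this direction by contraposition instead.} For $s>\dim_{P,\theta}\mu$ it has $F^\mu_{s,n,\theta}(x,r)>\epsilon r^s$ for all small $r$ on a set of positive measure. It splits into dyadic annuli and picks a term at least the average. When that term comes from an annulus beyond $r^\theta$, it uses the same growth lemma (with $a=1/(1+\epsilon)$) to transfer that mass to $B(x,r^\theta)$. Hence every window $[r,r^\theta]$ contains some $\delta$ with $\mu(B(x,\delta))\ge c_0\delta^{s+\epsilon(n+1/\theta)}$. In both routes the a.e.\ growth lemma is the essential ingredient your proposal lacks.
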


\begin{proof}We will start by proving the $\leq$ inequality.Let $0<s< \dim_{P,\theta} \mu$. Then for every $\epsilon>0$ and $\mu-a.e.\ x$, there exists a sequence $\{\delta_k\}$ with $\delta_k\to 0$ such that

$$\int \phi_{\delta_k,\theta}^{s,n}(x-y)\, d\mu(y) <\epsilon\, \delta_k^{s}.$$

Then for all $r\in [\delta_k,\, \delta_k^\theta]$ we have

$$\mu(B(x,r))\leq \int\mathbbm{1}_{B(0,r)}(x-y)\, d\mu(y)\leq \left(\frac{r}{\delta_k}\right)^s \int \phi_{\delta_k,\theta}^{s,n}(x-y)\, d\mu(y)<\epsilon\, r^s.$$

Hence the $\leq$ inequality follows.

To prove the opposite inequality, let $s>\dim_{P,\theta} \mu$. Then there exists $A\subset \mathbb{R}^n$, of positive $\mu$-measure, such that for all $x\in A$, there exist $\epsilon>0$ and $r_1(x)>0$ such that for all $r\in(0,r_1(x))$,

\begin{equation}\label{equation1}
\epsilon\, r^s < \int \phi_{r,\theta}^{s,n}(x-y)\, d\mu(y).\end{equation}

Moreover, by Lemma~\ref{lem:assouad_measure_growth} and the fact that $\dim_A(\operatorname{supp}\mu)\leq n$, given $\epsilon>0$, for $\mu-a.e.\ x$ there exists $r_2(x)>0$ such that

\begin{equation}\label{equation2}
\mu\bigl(B(x,r^{1/(1+\epsilon)})\bigr)\leq 4^{\,n(1+\epsilon)}\,\frac{r^{n}}{\rho^{\,n(1+\epsilon)}}\, \mu(B(x,\rho)),\end{equation}

for all $\rho\in(0,r_2(x))$ and all $r\in[\rho,1]$. 

Let $x\in A$ and fix $r_0(x)=\min\{r_1(x),r_2(x)\}$. We can suppose that$|\operatorname{supp}(\mu)|=1$.

Let $D=\lceil\log_2 (1/r)\rceil$ and let $M$ be the unique integer satisfying $2^{M-1}r<r^\theta\leq 2^M r$; also let $r_1'$ be sufficiently small to ensure that $2\leq M\leq D-2$ for all $0<r\leq r_1'$, and set $\tilde r_0(x):=\min\{r_0(x),r_1'\}$.Taking $x\in A$, and partitioning the support of $\mu$into consecutive annuli of the form $B(x,2^k r)\setminus B(x,2^{k-1}r)$,$1\leq k\leq D$, we obtain the following, for all $0<r\leq \tilde r_0(x)$:\begin{align*}        \epsilon\, r^s&\leq \int \phi_{r,\theta}^{s,n}(x-y)\,d\mu(y)\\        &\leq \mu(B(x,r))+\sum_{k=1}^{D}\int_{ B(x,2^k r)\setminus B(x,2^{k-1}r) }\phi_{r,\theta}^{s,n}(x-y)\,d\mu(y) \\        &\leq \mu(B(x,r))+\sum_{k=1}^{M}\int_{ B(x,2^k r)\setminus B(x,2^{k-1}r) }2^{-(k-1)s}\,d\mu(y)\\        &\quad + \sum_{k=M+1}^{D}\int_{ B(x,2^k r)\setminus B(x,2^{k-1}r) }r^{\theta(n-s)+s}(2^{k-1}r)^{-n}\,d\mu(y)\\        &\leq \mu(B(x,r))+\sum_{k=1}^{M}\mu(B(x,2^k r))\,2^{-(k-1)s}\\        & \quad +\sum_{k=M+1}^{D} \mu(B(x,2^k r))\,r^{(\theta-1)(n-s)}2^{-n(k-1)}.    \end{align*}

There are $D+1$ terms in the sum above, so at least one of them must be at least the arithmetic mean of the total sum. Therefore, we need to consider three cases:\begin{itemize}    \item $\displaystyle \frac{\epsilon r^s}{D+1}\leq \mu(B(x,r)). $    \item $\displaystyle\frac{\epsilon r^s}{D+1} \leq 2^{s} \mu(B(x,2^{k}r))2^{-ks}=4^s \mu(B(x,2^{k}r)) |B(x,2^{k}r)|^{-s}r^s $for some $k\in\{1,\dots,M\}.$    \item One term of the last sum is greater than or equal to the mean,    i.e., for some $k\in \{M+1,\dots,D\}$,    \begin{align*}\frac{\epsilon r^s}{D+1}&\leq r^{(\theta-1)(n-s)}\mu(B(x,2^{k}r))2^{-(k-1)n}\\&\leq r^{(\theta-1)(n-s)}\mu\bigl(B(x,(2^{k}r)^{1/(1+\epsilon)})\bigr)2^{-(k-1)n}\\&\leq 2^{n(3+2\epsilon)}\, r^{-\theta n \epsilon} r^{-\theta s} r^{s}\,\mu(B(x,r^\theta)),\end{align*}\end{itemize}by using inequality \eqref{equation2}.

Now, since $r^\epsilon\leq \frac{1}{D+1}$ for all sufficiently small $r$,it follows from the three cases above that, for all $x$ in a set of positive $\mu$-measure, there exists $c_0>0$ and $\bar r_0(x)>0$ such that for all $r\in (0,\bar r_0(x))$ there exists $\delta\in [r,r^\theta]$ such that $$c_0 \leq \delta^{-\left(s+\epsilon\left(n+\frac{1}{\theta}\right)\right)}\,\mu(B(x,\delta)).$$Therefore, the supremum on the right-hand side of equality\eqref{eq:newdef-formula} must be less than or equal to $s+\epsilon\left(n+\dfrac{1}{\theta}\right)$, and the result follows by letting $\epsilon\to 0$ and $s\to \dim_{P,\theta}\mu$.
\end{proof}

The next lemma is the key inequality of this section. It bounds the
intermediate dimension from below in terms of the packing dimension of the
measure and the Assouad dimension of its support. Note that for measures with
$\dim_P\mu=\dim_A E$ it shows that the dimension is constant in $\theta$.

\begin{lemma}\label{lem:threshold_lower_bound}
    Let $\mu$ be a Borel probability measure with compact support
    $E:=\operatorname{supp}\mu\subset\mathbb{R}^n$ and let
    $\theta\in(0,1]$. Then
    $$\dim_A E - \frac{\dim_A E - \dim_P \mu}{\theta}\leq \dim_{P,\theta}\mu.$$
\end{lemma}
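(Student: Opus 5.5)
Write $s:=\dim_A E$ and $\beta:=s-\frac{s-\dim_P\mu}{\theta}$. If $\beta\le 0$ there is nothing to prove, so assume $\beta>0$. The plan is to use the kernel-free characterization of Theorem~\ref{newdef}. By that result it suffices to show that every exponent slightly below $\beta$ belongs to the set on the right-hand side of \eqref{eq:newdef-formula}. Concretely, for $p<\dim_P\mu$ and small $\varepsilon>0$ we want to show the following: for $\mu$-a.e.\ $x$, every $\epsilon'>0$ and every $\delta_0>0$, there is $\delta(x)<\delta_0$ with
\[
r^{-u}\mu(B(x,r))<\epsilon' \quad\text{for all } r\in[\delta(x)^{1/\theta},\delta(x)],
\]
where $u=s(1+\varepsilon)-\frac{s(1+\varepsilon)-p}{\theta}$. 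Letting $p\uparrow\dim_P\mu$ and $\varepsilon\to 0$ then gives $u\to\beta$.

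The key tool is Lemma~\ref{Lemma 3.9}. Pick $a\in(0,1)$ close to $1$ and apply that lemma with the parameters $a,\theta$ and constant $c=\epsilon'$. For $\mu$-a.e.\ $x$ we obtain a sequence $\delta_k\to 0$ such that
\[
\mu(B(x,r))\le \epsilon'\, r^{\,s(1+\varepsilon)-\frac{s(1+\varepsilon)-p}{a\theta}} \quad\text{for } r\in[\delta_k^{a},\delta_k^{a\theta}].
\]
Set $\delta(x):=\delta_k^{a\theta}$ with $k$ large enough that $\delta(x)<\delta_0$. Then $\delta(x)^{1/\theta}=\delta_k^{a}$, so the window $[\delta_k^a,\delta_k^{a\theta}]$ is exactly $[\delta(x)^{1/\theta},\delta(x)]$, which is the window required in \eqref{eq:newdef-formula}. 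Since $r\le1$ and $\epsilon'$ is arbitrary, this shows that the exponent $u_a:=s(1+\varepsilon)-\frac{s(1+\varepsilon)-p}{a\theta}$ (or anything smaller) is admissible. One point needs care: the quantifier ``for all $\epsilon'$'' sits inside the a.e.\ statement. This is handled by taking a countable sequence $\epsilon'=1/j$ and intersecting the corresponding full-measure sets.

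Finally, let $a\to1$, $\varepsilon\to 0$ and $p\to\dim_P\mu$. Then $u_a\to \beta$, so $\dim_{P,\theta}\mu\ge\beta$. The step needing the most care is matching the window in Lemma~\ref{Lemma 3.9}, which involves the extra parameter $a<1$ inherited from Lemma~\ref{lem:assouad_measure_growth}, to the window $[\delta^{1/\theta},\delta]$ of Theorem~\ref{newdef}. This works because the substitution $\delta=\delta_k^{a\theta}$ aligns the two windows exactly, and the only cost is the factor $1/a$ in the exponent, which disappears in the limit. A secondary check is the case $\theta=1$: there the claim reads $\dim_P\mu\le\dim_{P,1}\mu$, which follows directly from Remark~\ref{rem:theta_one} and is consistent with the argument above.
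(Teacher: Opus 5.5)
Your argument is correct, but it takes a different route from the paper's.

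\textbf{Your route.} You invoke the kernel-free characterization of Theorem~\ref{newdef}, and specifically its harder half: every exponent admissible in \eqref{eq:newdef-formula} is at most $\dim_{P,\theta}\mu$. The paper proves that half by pigeonholing over dyadic annuli, using the growth estimate of Lemma~\ref{lem:assouad_measure_growth} in its ambient form (exponent $n(1+\epsilon)$). With that in hand, the window estimate of Lemma~\ref{Lemma 3.9}, after your substitution $\delta=\delta_k^{a\theta}$, is exactly membership of $u_a$ in the set of \eqref{eq:newdef-formula}.

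\textbf{The paper's route.} The proof of this lemma never uses Theorem~\ref{newdef}. It works directly with the kernel. Writing $A=(1+\varepsilon)\dim_A E$, it fixes $t<u_a-A(1-a)$ and bounds $F^{\mu}_{t,n,\theta}(x,r_k)$ by splitting into three regions: $|x-y|\le r_k$, $r_k<|x-y|\le r_k^{\theta}$, and $|x-y|>r_k^{\theta}$.
\begin{itemize}
\item The first two pieces are controlled by the same window estimate you use.
\item The tail needs a second application of Lemma~\ref{lem:assouad_measure_growth}, from $\rho=r_k^{\theta}$ up to scale $1$.
\end{itemize}
That tail step is the source of the extra loss $A(1-a)$ and of the constraint $a(1+\varepsilon)\dim_A E<n$.

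\textbf{Comparison.}
\begin{itemize}
\item Your version is shorter and modular. At each fixed $a$ it gives the slightly better exponent $u_a$, though the difference vanishes in the limit.
\item The cost is that the lemma now rests on the more delicate half of Theorem~\ref{newdef}. In effect, you outsource the tail control to the annular argument there.
\item The paper's proof is self-contained at the level of kernels and shows explicitly how the Assouad-type growth bound controls the tail.
\end{itemize}

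Your handling of the quantifiers by countable intersection over $\epsilon'=1/j$ is fine; take $c=\epsilon'/2$ in Lemma~\ref{Lemma 3.9} to get the strict inequality. Your treatment of $\theta=1$ via Remark~\ref{rem:theta_one} is also fine, and it is needed, since Lemma~\ref{Lemma 3.9} requires $\theta<1$.
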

\begin{proof}
If the left-hand side is
nonpositive there is nothing to prove; so fix $\theta\in(0,1)$ and
assume $\dim_A E-\frac{\dim_A E-\dim_P\mu}{\theta}>0$, which in
particular forces $\dim_P\mu>0$.

Fix $\varepsilon>0$, $a\in(0,1)$ with $a(1+\varepsilon)\dim_A E<n$
(possible since $\dim_A E\leq n$), and $p\in(0,\dim_P\mu)$. Write
$$A:=(1+\varepsilon)\dim_A E,\qquad
s:=A-\frac{A-p}{a\theta},$$
and fix $t$ with $0\leq t<s-A(1-a)$, assuming this range is nonempty.

Let $c>0$. By Lemma~\ref{Lemma 3.9}, for $\mu$-almost every
$x\in\mathbb{R}^n$ there exists a sequence $r_k'\to 0$ such that
\begin{equation}\label{eq:Frostman}
\mu\bigl(B(x,\delta)\bigr)\leq c\,\delta^{s}
\qquad\text{whenever }\delta\in[(r_k')^{a},(r_k')^{a\theta}],
\end{equation}
and by Lemma~\ref{lem:assouad_measure_growth} (applied with the same
parameters $a$ and $\varepsilon$), for $\mu$-almost every $x$ there
exists $\rho_0(x)>0$ such that
\begin{equation}\label{eq:growth}
\mu\bigl(B(x,r)\bigr)\leq\left(\frac{4r}{\rho}\right)^{A}
\mu\bigl(B(x,\rho)\bigr)
\qquad\text{for }0<\rho<\rho_0(x),\ \rho^{a}\leq r\leq 1.
\end{equation}
Fix $x$ in the intersection of both full-measure sets, write
$r_k:=(r_k')^{a}$, so that $[r_k,r_k^{\theta}]=[(r_k')^a,(r_k')^{a\theta}]$
is the window in \eqref{eq:Frostman}, and discard the finitely many $k$
with $r_k^{\theta}\geq\rho_0(x)$. Since each term of the minimum
defining $F_{t,n,\theta}^{\mu}$ dominates it, we may split
$$F_{t,n,\theta}^{\mu}(x,r_k)\leq F_1+F_2+F_3,$$
where $F_1,F_2,F_3$ are the integrals of $1$, of $r_k^t|x-y|^{-t}$ and
of $r_k^{\theta(n-t)+t}|x-y|^{-n}$ over the regions $|x-y|\leq r_k$,
$r_k<|x-y|\leq r_k^{\theta}$ and $|x-y|>r_k^{\theta}$, respectively.

\emph{Estimate of $F_1$.} By \eqref{eq:Frostman} with $\delta=r_k$, and
since $t<s$,
$$F_1=\mu\bigl(B(x,r_k)\bigr)\leq c\,r_k^{s}\leq c\,r_k^{t}.$$

\emph{Estimate of $F_2$.} By a standard computation we have,
for $0<\alpha<\beta$,
$$\int_{\alpha<|x-y|\leq\beta}|x-y|^{-t}\,d\mu(y)
\leq \beta^{-t}\mu\bigl(B(x,\beta)\bigr)
+t\int_{\alpha}^{\beta}u^{-t-1}\mu\bigl(B(x,u)\bigr)\,du.$$
Applying this with $\alpha=r_k$, $\beta=r_k^{\theta}$ and using
\eqref{eq:Frostman} for $u\in[r_k,r_k^{\theta}]$,
$$F_2\leq r_k^{t}\left(c\,r_k^{\theta(s-t)}
+ct\int_{r_k}^{r_k^{\theta}}u^{s-t-1}\,du\right)
\leq c\left(1+\frac{t}{s-t}\right)r_k^{\theta(s-t)}\,r_k^{t}.$$

\emph{Estimate of $F_3$.} We claim that, for $k$ large enough,
\begin{equation}\label{eq:tail_bound}
\mu\bigl(B(x,u)\bigr)\leq 4^{A}c\,u^{aA}\,r_k^{-\theta(A-s)}
\qquad\text{for all }u>r_k^{\theta}.
\end{equation}
Indeed, for $u\in(r_k^{\theta},1]$ we have $u\leq u^{a}\leq 1$ and
$u^{a}\geq(r_k^{\theta})^{a}$, so \eqref{eq:growth} with $\rho=r_k^{\theta}$
and radius $u^{a}$, followed by \eqref{eq:Frostman} with
$\delta=r_k^{\theta}$, gives
$$\mu\bigl(B(x,u)\bigr)\leq\mu\bigl(B(x,u^{a})\bigr)
\leq\left(\frac{4u^{a}}{r_k^{\theta}}\right)^{A}
\mu\bigl(B(x,r_k^{\theta})\bigr)
\leq 4^{A}c\,u^{aA}\,r_k^{\theta(s-A)};$$
and for $u>1$ we simply use $\mu(B(x,u))\leq 1\leq 4^{A}c\,u^{aA}\,
r_k^{-\theta(A-s)}$, valid for $k$ large since $A>s$ and $r_k\to 0$.
Then we have,
\begin{align*}
    F_3&\leq r_k^{\theta(n-t)+t} n\int_{r_k^{\theta}}^{\infty}
u^{-n-1}\mu\bigl(B(x,u)\bigr)\,du\\
&\leq \frac{4^{A}c\,n}{n-aA}\,r_k^{\theta(n-t)+t}\,
r_k^{-\theta(A-s)}\,(r_k^{\theta})^{aA-n}\\
&\leq \frac{4^{A}c\,n}{n-aA}\,r_k^{t},
\end{align*}
using \eqref{eq:tail_bound}, $aA<n$ and $t<s-A(1-a)$.

Combining the three estimates,
$F_{t,n,\theta}^{\mu}(x,r_k)\leq K\,c\,r_k^{t}$ for all large $k$, where
$K$ depends on $a,\varepsilon,\theta,t,s,n$ but not on $c$ or $k$. Hence
$$\liminf_{r\to 0}r^{-t}F_{t,n,\theta}^{\mu}(x,r)\leq K\,c.$$
Taking a sequence $c_j\to 0$ and intersecting the corresponding
full-measure sets of $x$, we conclude that
$\liminf_{r\to 0}r^{-t}F_{t,n,\theta}^{\mu}(x,r)=0$ for $\mu$-a.e.\ $x$
and every $t<s-A(1-a)$, and therefore
\begin{align*}
 a(1+\varepsilon)\dim_A E-\frac{(1+\varepsilon)\dim_A E-p}{a\theta}
\leq \dim_{P,\theta}\mu\end{align*}

Finally, letting $\varepsilon\to 0$, $a\to 1$ and $p\to\dim_P\mu$, the left-hand side
converges to $\dim_A E-\frac{\dim_A E-\dim_P\mu}{\theta}$, which
completes the proof.
\end{proof}

\section{Full packing dimension threshold}

We are now ready to prove the main result of this paper: the threshold
parameter $D(\mu)$ characterizes exactly when the orthogonal projections
of $\mu$ onto almost every $m$-dimensional subspace attain full packing
dimension.

\begin{theorem}\label{maintheorem}
    Let $\mu$ be a Borel probability measure with compact support on
    $\mathbb{R}^n$ and let $m\in\mathbb{N}$ with $1\leq m\leq n$. Then
    $$\dim_P \mu_V = m \text{ for }\gamma_{n,m}-a.e.\ V\in G(n,m) \quad\Longleftrightarrow\quad m\leq D(\mu).$$
\end{theorem}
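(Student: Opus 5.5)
The plan is to reduce everything to the packing dimension profile. By Theorem~\ref{ProjPacking}, $\dim_P\mu_V=\dim_P^m\mu$ for $\gamma_{n,m}$-a.e.\ $V$. Trivially $\dim_P\mu_V\le m$, so $\dim_P^m\mu\le m$ always. By Remark~\ref{rem:monotone_theta}, $m\le D(\mu)$ is equivalent to $\dim_{P,\theta}\mu\ge m$ for every $\theta\in(0,1]$. So it suffices to prove
$$\dim_P^m\mu\ge m\iff \dim_{P,\theta}\mu\ge m\ \text{ for all }\theta\in(0,1].$$
I would prove each direction by a direct comparison of the kernel $\min\{1,r^m|z|^{-m}\}$ with $\phi^{t,n}_{r,\theta}$, using Theorem~\ref{newdef} for one of them.

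\emph{($\Rightarrow$).} Assume $\dim_P^m\mu=m$. Fix $\theta\in(0,1)$ and $s<m$ (for $\theta=1$ use instead $\dim_{P,1}\mu=\dim_P\mu\ge\dim_P^m\mu$, or the same argument). Choose $s'<m$ with $s'\ge m-\theta(m-s)$.

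Then for $\mu$-a.e.\ $x$ and every $\epsilon>0$ there is a sequence $r_k\to0$ with $F_m^\mu(x,r_k)<\epsilon r_k^{s'}$. For $\rho\ge r_k$ the kernel is at least $(r_k/\rho)^m$ on $B(x,\rho)$, so
$$\mu(B(x,\rho))\le (\rho/r_k)^mF^\mu_m(x,r_k)<\epsilon\,\rho^s\,\rho^{m-s}r_k^{s'-m}.$$
For $\rho\in[r_k,r_k^\theta]$ we have $\rho^{m-s}\le r_k^{\theta(m-s)}$, so the right side is at most $\epsilon\rho^s r_k^{\theta(m-s)+s'-m}\le\epsilon\rho^s$.

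Set $\delta(x)=r_k^\theta$. The window $[\delta^{1/\theta},\delta]$ is exactly $[r_k,r_k^\theta]$, and $\delta(x)$ can be taken below any $\delta_0$. Theorem~\ref{newdef} then gives $\dim_{P,\theta}\mu\ge s$. Letting $s\to m$ yields $\dim_{P,\theta}\mu\ge m$ for every $\theta$, hence $D(\mu)\ge m$.

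\emph{($\Leftarrow$).} Assume $\dim_{P,\theta}\mu\ge m$ for all $\theta$. Fix $\theta$ and $t<m$. By Remark~\ref{rem:downward}, $\liminf_r r^{-t}F^\mu_{t,n,\theta}(x,r)=0$ for $\mu$-a.e.\ $x$.

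I compare the kernels region by region:
\begin{itemize}
\item for $|z|\le r$ both kernels equal $1$;
\item for $r<|z|\le r^\theta$, since $m>t$ and $r/|z|<1$, we have $r^m|z|^{-m}\le r^t|z|^{-t}$;
\item for $|z|>r^\theta$ I do not compare with the $n$-kernel but simply bound $r^m|z|^{-m}\le r^{m(1-\theta)}$.
\end{itemize}
Hence
$$F_m^\mu(x,r)\le F^\mu_{t,n,\theta}(x,r)+r^{m(1-\theta)}.$$
Along the sequence realizing the liminf, this gives $\liminf_r r^{-t'}F_m^\mu(x,r)=0$ for every $t'<\min\{t,m(1-\theta)\}$. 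Letting $\theta\to0$ and $t\to m$ gives $\dim_P^m\mu\ge m$, hence $=m$.

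The only delicate point is the quantifier bookkeeping in ($\Rightarrow$). Theorem~\ref{newdef} requires scales $\delta(x)<\delta_0$ for every $\epsilon$, while the profile gives a liminf along a point-dependent sequence. This is handled because the liminf is $0$, so for each $\epsilon$ arbitrarily small $r_k$ exist. The case $\theta=1$ is degenerate: there the window is a single scale and the statement follows from the same estimate with $\rho=r_k$.
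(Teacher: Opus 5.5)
Your proof is correct. The $(\Leftarrow)$ direction is essentially the paper's Lemma~\ref{dumleq}. The paper routes through $F^\mu_{t,m,\theta}$ and gets the tail $r^{t(1-\theta)}$; your direct bound of the far region by $r^{m(1-\theta)}$ is the same idea.

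The $(\Rightarrow)$ direction takes a different route. The paper works with a single projection. It compares $\phi^{t,n}_{r,\theta}(z)\le\phi^{t,m}_{r,\theta}(\pi_V z)$ to get $\dim_{P,\theta}\mu_V\le\dim_{P,\theta}\mu$ for every $V$. It then applies Lemma~\ref{lem:threshold_lower_bound} to $\mu_V$, where $\dim_P\mu_V=\dim_A(\supp\mu_V)=m$ forces $\dim_{P,\theta}\mu_V\ge m$.

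You instead pass to the profile $\dim_P^m\mu=m$ via Theorem~\ref{ProjPacking}. You convert $\liminf_r r^{-s'}F_m^\mu(x,r)=0$ into a Frostman bound on the whole window $[r_k,r_k^\theta]$, and then invoke the harder half of Theorem~\ref{newdef}. This bypasses the Assouad/regular-measure machinery behind Lemma~\ref{lem:threshold_lower_bound}, since Theorem~\ref{newdef} only uses the ambient Falconer--Mattila growth estimate. The paper's argument, by contrast, stays at the level of $\mu_V$ and showcases its Assouad-type lower bound.

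Even Theorem~\ref{newdef} is dispensable here. The region-by-region comparison you used for $(\Leftarrow)$, run in the other direction, gives for $0\le t<m$ and $0<r<1$
\[
\phi^{t,n}_{r,\theta}(z)\le r^{-(1-\theta)(m-t)}\min\{1,r^m|z|^{-m}\}.
\]
On $r<|z|\le r^\theta$ this uses $(|z|/r)^{m-t}\le r^{-(1-\theta)(m-t)}$, and on $|z|>r^\theta$ it uses $|z|^{m-n}\le r^{\theta(m-n)}$. Hence $r^{-t}F^\mu_{t,n,\theta}(x,r)\le r^{-s'}F^\mu_m(x,r)$ with $s'=\theta t+(1-\theta)m<m$. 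So $\dim_P^m\mu=m$ yields $\dim_{P,\theta}\mu\ge t$ directly, with no growth estimate at all.
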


The proof is split into the following two lemmas.

\begin{lemma}\label{mleqdu}
    Let $\mu$ be a Borel probability measure with compact support
    $E\subset\mathbb{R}^n$ and let $m\in\mathbb{N}$, $1\leq m\leq n$.
    
    If
    $\dim_P \mu_V = m \text{ for }\gamma_{n,m}-a.e.\ V\in G(n,m)$, then $m\leq D(\mu)$.
\end{lemma}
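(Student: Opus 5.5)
By Theorem~\ref{ProjPacking}, the hypothesis gives $\dim_P^m\mu=m$, since $\dim_P\mu_V$ equals $\dim_P^m\mu$ for $\gamma_{n,m}$-a.e.\ $V$. By Remark~\ref{rem:monotone_theta}, $D(\mu)=\inf_{\theta\in(0,1]}\dim_{P,\theta}\mu$. It therefore suffices to fix $\theta\in(0,1]$ and show $\dim_{P,\theta}\mu\geq m$. I would do this through the kernel-free characterization in Theorem~\ref{newdef}, which avoids comparing the kernel $\phi^{t,n}_{r,\theta}$ with $\min\{1,r^m|z|^{-m}\}$ directly. That comparison goes the wrong way in the middle range $r<|z|\le r^\theta$ when $t<m$.

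\emph{Step 1: converting the profile into ball estimates.} Fix $t<m$ and $\epsilon>0$. Since $t<\dim_P^m\mu$, the same monotonicity as in Remark~\ref{rem:downward} (now for $F_m^\mu$) shows that for $\mu$-a.e.\ $x$ there are $r_k\to0$ with $F_m^\mu(x,r_k)<\epsilon\,r_k^{t}$. For $\rho\geq r_k$ and $|x-y|\le\rho$ we have $\min\{1,r_k^m|x-y|^{-m}\}\geq (r_k/\rho)^m$. Hence
\[
\mu(B(x,\rho))\leq \Bigl(\frac{\rho}{r_k}\Bigr)^m F_m^\mu(x,r_k)<\epsilon\,\rho^m r_k^{\,t-m}\qquad(\rho\geq r_k).
\]

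\emph{Step 2: choosing the window.} Put $\delta_k:=r_k^{\theta}$, so that $[\delta_k^{1/\theta},\delta_k]=[r_k,r_k^\theta]$. Set $s:=m-\frac{m-t}{\theta}$. For $\rho\in[r_k,r_k^\theta]$ we want $\rho^m r_k^{t-m}\le\rho^s$, i.e.\ $\rho^{m-s}\le r_k^{m-t}$. Since $m-s=(m-t)/\theta\ge0$ and $\rho\le r_k^\theta$, this holds. Thus $\rho^{-s}\mu(B(x,\rho))<\epsilon$ for all $\rho\in[\delta_k^{1/\theta},\delta_k]$, and $\delta_k$ can be taken below any prescribed $\delta_0$. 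Taking $\epsilon$ along a sequence tending to $0$ and intersecting the countably many full-measure sets, the condition in \eqref{eq:newdef-formula} holds for this $s$. Theorem~\ref{newdef} then gives $\dim_{P,\theta}\mu\geq m-\frac{m-t}{\theta}$.

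\emph{Step 3: conclusion.} Letting $t\uparrow m$ gives $\dim_{P,\theta}\mu\ge m$ for every $\theta\in(0,1]$, hence $D(\mu)\ge m$. The only delicate point is Step 2: small radii $\rho<r_k$ are not controlled by the profile estimate. This is why the window must start exactly at $r_k$, which is compatible with the shape $[\delta^{1/\theta},\delta]$ required in Theorem~\ref{newdef}. I would also check that the quantifiers of that theorem ("for every $\epsilon$ and $\delta_0$, for $\mu$-a.e.\ $x$") match what Step 1 provides. They do, since the sequence $r_k$ tends to $0$ and is allowed to depend on $x$.
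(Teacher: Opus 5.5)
Your argument is correct, but it takes a different route from the paper. The paper does not pass through the profile $\dim_P^m\mu$ or through Theorem~\ref{newdef}. It first proves, by a pointwise kernel comparison, that $\dim_{P,\theta}\mu_V\le\dim_{P,\theta}\mu$ for every $V\in G(n,m)$ and every $\theta$. The comparison uses $\phi^{t,n}_{r,\theta}\le\phi^{t,m}_{r,\theta}$, the monotonicity of $\phi^{t,m}_{r,\theta}$ in $|z|$, and $|\pi_V z|\le|z|$. It then applies Lemma~\ref{lem:threshold_lower_bound} to $\mu_V$ itself, where $\dim_P\mu_V=m=\dim_A(\supp\mu_V)$ makes the lower bound equal to $m$ for all $\theta$. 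This gives monotonicity of $\dim_{P,\theta}$ under projections as a by-product, but it imports the Assouad machinery of Section~3 (Lemmas~\ref{Lemma 3.9} and~\ref{lem:assouad_measure_growth}), applied to $\mu_V$.

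You instead use only the inequality $\dim_P\mu_V\le\dim_P^m\mu$ from Theorem~\ref{ProjPacking}. You obtain the more general bound $\dim_{P,\theta}\mu\ge m-(m-\dim_P^m\mu)/\theta$, with no Assouad dimension at all. Your one nontrivial input is the inequality ``$\dim_{P,\theta}\mu\ge$ right-hand side'' of Theorem~\ref{newdef}, whose proof relies on the Falconer--Mattila growth estimate.

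You can drop that dependence too, because the middle-range obstruction you mention disappears once the exponent is shifted. For $m(1-\theta)\le t<m$ and $s=m-(m-t)/\theta\in[0,t]$, one checks region by region that
\[
r^{-s}\phi^{s,n}_{r,\theta}(z)\le r^{-t}\min\{1,r^m|z|^{-m}\}\qquad(0<r<1).
\]
On $|z|\le r$ this holds since $s\le t$. On $r<|z|\le r^\theta$ it holds since $|z|^{(m-t)/\theta}\le r^{m-t}$. On $|z|>r^\theta$ it holds since $|z|^{n-m}\ge r^{\theta(n-m)}$ and $m-t=\theta(m-s)$. Hence $r^{-s}F^\mu_{s,n,\theta}(x,r)\le r^{-t}F^\mu_m(x,r)$, and the bound follows directly from the definition of $\dim_{P,\theta}\mu$. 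Your Steps~1--2 are exactly the ball-level version of this kernel inequality.
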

\begin{proof}

Let $V\in G(n,m)$. For $0\leq t\leq m$ we have
$\phi_{r,\theta}^{t,n}(z)\leq\phi_{r,\theta}^{t,m}(z)$, since the two
kernels differ only in their last term and
$r^{\theta(n-t)+t}|z|^{-n}\leq r^{\theta(m-t)+t}|z|^{-m}$ whenever
$|z|>r^{\theta}$; moreover $\phi_{r,\theta}^{t,m}$ is a nonincreasing
function of $|z|$ and $|\pi_V(z)|\leq|z|$. Therefore
\begin{align*}
    F_{t,n,\theta}^{\mu}(x,r)&=\int \phi_{r,\theta}^{t,n}(x-y)\, d\mu(y)
    \leq\int \phi_{r,\theta}^{t,m}\bigl(\pi_V(x)-\pi_V(y)\bigr)\, d\mu(y)\\
    &= \int \phi_{r,\theta}^{t,m}\bigl(\pi_V(x)-w\bigr)\, d\mu_V(w)
    =F_{t,m,\theta}^{\mu_V}\bigl(\pi_V(x),r\bigr),
\end{align*}
the last equality because $\mu_V$ is a measure on $V\cong\mathbb{R}^m$.
It follows that
\begin{equation}\label{eq:proj_chain}
\dim_{P,\theta}\mu_V \leq \dim_{P,\theta}\mu
\qquad\text{for all }V\in G(n,m),\ \theta\in(0,1].
\end{equation}

Now assume $\dim_P\mu_V=m$ for $\gamma_{n,m}$-almost
every $V\in G(n,m)$. Fix such a $V$, by monotony we have
$$m=\dim_P\mu_V\leq\dim_A(\operatorname{supp}\mu_V)\leq m.$$
Applying
Lemma~\ref{lem:threshold_lower_bound} to $\mu_V$,
$$\dim_{P,\theta}\mu_V\geq
\dim_A(\operatorname{supp}\mu_V)-\frac{\dim_A(\operatorname{supp}\mu_V)
-\dim_P\mu_V}{\theta}=m$$
for every $\theta\in(0,1].$
Combining with \eqref{eq:proj_chain}, $m\leq\dim_{P,\theta}\mu$ for
every $\theta\in(0,1]$, and by Remark~\ref{rem:monotone_theta} we
conclude $m\leq D(\mu)$.
\end{proof}

\begin{lemma}\label{dumleq}
    Let $\mu$ be a Borel probability measure with compact support on
    $\mathbb{R}^n$ and let $m\in\mathbb{N}$ with $1\leq m\leq n$.
    
    If
    $m\leq D(\mu)$, then $$\dim_P \mu_V = m \text{ for }\gamma_{n,m}-a.e.\, V\in G(n,m).$$
\end{lemma}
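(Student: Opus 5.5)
The plan is to reduce everything to the packing dimension profile via Theorem~\ref{ProjPacking}. Since $\mu_V$ is a measure on $V\cong\mathbb{R}^m$, we always have $\dim_P\mu_V\leq m$. By Theorem~\ref{ProjPacking}, $\dim_P\mu_V=\dim_P^m\mu$ for $\gamma_{n,m}$-a.e.\ $V$. It therefore suffices to show $\dim_P^m\mu\geq m$, i.e.\ that for every $t<m$ we have $\liminf_{r\to0}r^{-t}F_m^\mu(x,r)=0$ for $\mu$-a.e.\ $x$.

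Fix $t<m$ and choose $\theta\in(0,1]$ so small that $m-t-\theta(n-t)\geq 0$; this is possible since $m-t>0$. By Remark~\ref{rem:monotone_theta}, $\dim_{P,\theta}\mu\geq D(\mu)\geq m>t$. Remark~\ref{rem:downward} then gives $\liminf_{r\to0}r^{-t}F^\mu_{t,n,\theta}(x,r)=0$ for $\mu$-a.e.\ $x$.

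The key step is a pointwise kernel comparison
$$\min\{1,r^m|z|^{-m}\}\leq C\,\phi^{t,n}_{r,\theta}(z)\qquad\text{for }|z|\leq |E|,\ 0<r<1,$$
where $E=\supp\mu$ and, after rescaling, $|E|\leq 1$. We check it in the three regions.
\begin{itemize}
\item For $|z|\leq r$, both sides are at least comparable to $1$.
\item For $r<|z|\leq r^\theta$, we have $r^m|z|^{-m}=(r/|z|)^m\leq (r/|z|)^t$, because $r/|z|<1$ and $t<m$.
\item For $|z|>r^\theta$, the ratio is
$$\frac{r^m|z|^{-m}}{r^{\theta(n-t)+t}|z|^{-n}}=r^{m-t-\theta(n-t)}|z|^{n-m}\leq 1,$$
using $|z|\leq 1$, $n\geq m$, $r<1$ and the choice of $\theta$.
\end{itemize}
Since $x,y\in E$ for $\mu$-a.e.\ $x$, we get $|x-y|\leq|E|$, and integrating gives $F_m^\mu(x,r)\leq C\,F^\mu_{t,n,\theta}(x,r)$. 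Hence $\liminf_{r\to0}r^{-t}F_m^\mu(x,r)=0$ for $\mu$-a.e.\ $x$, so $\dim_P^m\mu\geq t$. Letting $t\uparrow m$ gives $\dim_P^m\mu\geq m$, which concludes the proof.

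The only delicate point is the tail region $|z|>r^\theta$. There the $n$-decay of $\phi^{t,n}_{r,\theta}$ is faster than the $m$-decay of the profile kernel. This is exactly why $\theta$ must be taken small, which is where the hypothesis on $D(\mu)$, as opposed to merely $\dim_P\mu$, is used. Compactness of the support handles large $|z|$.
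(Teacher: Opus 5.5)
Your argument is correct and has the same structure as the paper's proof: you reduce via Theorem~\ref{ProjPacking} to showing $\dim_P^m\mu\geq m$, and you compare $F_m^\mu$ with $F^\mu_{t,n,\theta}$ using $t<m\leq D(\mu)\leq\dim_{P,\theta}\mu$. The only difference is the tail $|z|>r^\theta$: the paper passes through $F^\mu_{t,m,\theta}$ and bounds that tail additively by $\mu(\mathbb{R}^n)\,r^{t(1-\theta)}$, losing a factor $1-\theta$ in the exponent that disappears as $\theta\to0$, whereas you take $\theta\leq(m-t)/(n-t)$ and use $|x-y|\leq|E|$ to get a multiplicative comparison with no loss (and no rescaling is needed, since $C=\max\{1,|E|\}^{n-m}$ works).
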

\begin{proof}
Since $\dim_P\mu_V\leq m$ for all $V$, we only need the reverse
inequality.

First note that
\begin{equation}\label{eq:comparison_m_n}
    F_m^{\mu}(x,r)\leq F_{t,m,\theta}^\mu(x,r)
    \leq F_{t,n,\theta}^\mu(x,r) + \mu(\mathbb{R}^n)\, r^{t(1-\theta)},
\end{equation}
valid for all $x\in\mathbb{R}^n$, $r\in(0,1)$, $\theta\in(0,1]$ and
$t\in[0,m]$. 

Let $t<m$ and $\theta\in(0,1]$. By Remark~\ref{rem:monotone_theta},
$$t<m\leq D(\mu)=\inf_{\theta'\in(0,1]}\dim_{P,\theta'}\mu
\leq \dim_{P,\theta}\mu,$$
therefore
$$\liminf_{r\to 0}r^{-t}F_{t,n,\theta}^{\mu}(x,r)=0
\qquad\text{for }\mu\text{-a.e.\ }x\in\mathbb{R}^{n}.$$
Hence, for $\mu$-a.e.\ $x$ there exists a sequence $r_k\to 0$ with
$F_{t,n,\theta}^\mu(x,r_k)\leq r_k^{t}$, and \eqref{eq:comparison_m_n}
gives
$$F_m^{\mu}(x,r_k)\leq 2\,r_k^{t(1-\theta)}.$$
Now observe that $F_m^\mu$ does not depend on $t$; therefore, for every
$t'<t(1-\theta)$,
$$r_k^{-t'}F_m^{\mu}(x,r_k)\leq 2\,r_k^{\,t(1-\theta)-t'}
\longrightarrow 0\qquad (k\to\infty),$$
so that $\liminf_{r\to 0}r^{-t'}F_m^\mu(x,r)=0$ for $\mu$-a.e.\ $x$, and
consequently $t'\leq\dim_P\mu_V$ for $\gamma_{n,m}-a.e.\ V\in G(n,m)$ by Theorem \ref{ProjPacking}. Letting $t'\to t(1-\theta)$, $\theta\to 0$ and $t\to m$ (along countable sequences,
intersecting the corresponding full-measure sets) we obtain

$$m\leq\dim_P\mu_V$$
for $\gamma_{n,m}-a.e.\, V\in G(n,m)$.
\end{proof}

\begin{proof}[Proof of Theorem~\ref{maintheorem}]
The proof follows by Lemma~\ref{mleqdu} and Lemma~\ref{dumleq}.
\end{proof}

\section{Relations with the Assouad dimension of the support}

The following theorem quantifies how much the $m$-dimensional profile of
a measure can drop below its packing dimension. Note that the
measure appears in the bound only through $\dim_P\mu$; the size of the
possible loss is governed by the support alone, and is therefore common
to all measures supported on a given set.

\begin{theorem}\label{thm:profile_assouad_bound}
    Let $\mu$ be a Borel probability measure with compact support
    $E\subset\mathbb{R}^{n}$ and let $0<m\leq n$. Then
    $$\dim_P^m \mu \geq \dim_P \mu - \max\{0,\ \dim_A E - m\}.$$
\end{theorem}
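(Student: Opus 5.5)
The plan is to work directly with the definition of $\dim_P^m\mu$. I will show that for every $t<\dim_P\mu-\max\{0,\dim_A E-m\}$ one has $\liminf_{r\to0}r^{-t}F_m^\mu(x,r)=0$ for $\mu$-a.e.\ $x$. The idea is to evaluate $F_m^\mu(x,\cdot)$ along the sequence of scales $\delta_k$ at which $\mu(B(x,\delta_k))$ is small, namely $\mu(B(x,\delta_k))\le\delta_k^{p}$ for a fixed $p<\dim_P\mu$, which is available $\mu$-a.e.\ by \cite[Lemma 4.1]{hu1994fractal}. At larger radii I will control $\mu(B(x,u))$ by Lemma~\ref{lem:assouad_measure_growth}. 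This mirrors the proof of Lemma~\ref{Lemma 3.9}, but with a single scale $\delta_k$ rather than a window.

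Concretely, write $s=\dim_A E$, fix $\varepsilon>0$ and $a\in(0,1)$, and put $A=s(1+\varepsilon)$. If $s=0$ then $\dim_P\mu=0$ and there is nothing to prove, so assume $s>0$. Fix $x$ in the intersection of the two full-measure sets and set $r=\delta_k$, with $k$ large enough that $r<\rho_0(x)$. The layer-cake formula gives
\[
F_m^\mu(x,r)\le \mu(B(x,r))+m\,r^m\int_r^\infty u^{-m-1}\mu(B(x,u))\,du .
\]
I split the integral into three ranges and bound each piece as follows.
\begin{itemize}
\item On $[r,r^a]$, monotonicity and \eqref{eq:ball_growth} with $\rho=r$ and radius $r^a$ give $\mu(B(x,u))\le\mu(B(x,r^a))\le 4^A r^{(a-1)A}r^p$. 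Since $\int_r^\infty u^{-m-1}\,du\le r^{-m}/m$, this range contributes at most $4^A r^{p-(1-a)A}$.
\item On $[r^a,1]$, \eqref{eq:ball_growth} gives $\mu(B(x,u))\le(4u/r)^A r^p$, so this range contributes $\lesssim r^{m+p-A}\int_{r^a}^1u^{A-m-1}\,du$.
\item On $[1,\infty)$, I use $\mu\le1$, which contributes $\lesssim r^m$.
\end{itemize}

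The middle integral is where the dichotomy in $\max\{0,s-m\}$ appears. If $A>m$, the integral is bounded and the contribution is $\lesssim r^{p-(A-m)}$. If $A<m$, the integral is dominated by its lower endpoint and the contribution is $\lesssim r^{p+(1-a)(m-A)}\le r^{p-(1-a)A}$. The borderline $A=m$ produces only a factor $\log(1/r)$, which is harmless because $t$ is strict; alternatively it can be avoided by perturbing $\varepsilon$. Since $p\le\dim_P\mu\le s\le A$, we also have $p-(A-m)\le m$ and, when $A<m$, $p\le m$. Collecting terms,
\[
F_m^\mu(x,\delta_k)\le C\,\delta_k^{\,p-\max\{0,A-m\}-(1-a)A}\,\log(1/\delta_k),
\]
with $C$ independent of $k$. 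Hence $\liminf_{r\to0}r^{-t}F_m^\mu(x,r)=0$ for every $t$ strictly below this exponent.

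Finally, I let $a\to1$, $\varepsilon\to0$ and $p\to\dim_P\mu$ along countable sequences and intersect the corresponding full-measure sets. This gives $\dim_P^m\mu\ge\dim_P\mu-\max\{0,s-m\}$. The main point to check is the use of Lemma~\ref{lem:assouad_measure_growth}. It only controls radii $u\ge\rho^a$, so the gap $[r,r^a]$ must be handled crudely by monotonicity, and this is what costs the $(1-a)A$ term that disappears as $a\to1$. It is also essential that the exponent in that lemma is $A$ rather than $n$: this is exactly what replaces the Falconer--Mattila loss by the Assouad-dimension loss. Note also that $m$ is allowed to be non-integer here, but nothing in the computation uses integrality.
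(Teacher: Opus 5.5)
Your proposal is correct and follows the paper's proof essentially step by step. It uses the same scales $r_k$ with $\mu(B(x,r_k))\le r_k^{p}$, the same split of the layer-cake integral into $[r_k,r_k^{a}]$, $[r_k^{a},1]$ and $[1,\infty)$ controlled by Lemma~\ref{lem:assouad_measure_growth}, and the same case split $A>m$ versus $A<m$ in the middle range. The only difference is cosmetic: you absorb the borderline case $A=m$ with a $\log(1/r)$ factor, whereas the paper chooses $\varepsilon$ so that $m\neq \dim_A E\,(1+\varepsilon)$.
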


\begin{proof}
If $\dim_P\mu=0$ the right-hand side is nonpositive and there is nothing
to prove, so assume $\dim_P\mu>0$; write $A:=\dim_A E$ and note that
$0<\dim_P\mu\leq A$.

Let $0<p<\dim_P\mu$ and let $0\leq t<\min\{p,\ p+m-A\}$. Choose
$\varepsilon>0$ and $a\in(0,1)$ such that $m\neq A(1+\varepsilon)$ and
\begin{equation}\label{eq:choice_params}
t<\min\bigl\{p-A(1+\varepsilon)(1-a),\ p+m-A(1+\varepsilon)\bigr\}.
\end{equation}

Then, for $\mu$-almost every $x$ there
exists a sequence $r_k\to 0$ with
\begin{equation}\label{eq:frostman_seq}
\mu\bigl(B(x,r_k)\bigr)\leq r_k^{\,p},
\end{equation}
and by Lemma~\ref{lem:assouad_measure_growth}, applied with the
parameters $a$ and $\varepsilon$, for $\mu$-almost every $x$ there exists
$\rho_0(x)>0$ such that
\begin{equation}\label{eq:growth_thm}
\mu\bigl(B(x,r)\bigr)\leq\left(\frac{4r}{\rho}\right)^{A(1+\varepsilon)}
\mu\bigl(B(x,\rho)\bigr),
\end{equation}
for $0<\rho<\rho_0(x),\
\rho^{a}\leq r\leq 1.$
Fix $x$ in the intersection of both full-measure sets and discard the
finitely many $k$ with $r_k\geq\rho_0(x)$. Write $B_x(h):=\mu(B(x,h))$. Since
$\min\{1,r^m|z|^{-m}\}=m\,r^m\int_{\max\{r,|z|\}}^{\infty}h^{-m-1}dh$,
we have
$$F_{m}^{\mu}(x,r)=m\,r^{m}\int_{r}^{\infty}h^{-m-1}B_{x}(h)\,dh.$$
Accordingly, we write $F_{m}^{\mu}(x,r_k)=I_1+I_2+I_3$, where
\begin{align*}
I_1&:=m\,r_k^{m}\int_{r_k}^{r_k^{a}}h^{-m-1}B_{x}(h)\,dh,\\
I_2&:=m\,r_k^{m}\int_{r_k^{a}}^{1}h^{-m-1}B_{x}(h)\,dh,\\
I_3&:=m\,r_k^{m}\int_{1}^{\infty}h^{-m-1}B_{x}(h)\,dh.
\end{align*}

Since 
$\int_{r_k}^{r_k^{a}}h^{-m-1}dh\leq r_k^{-m}/m$, using
\eqref{eq:growth_thm} with $\rho=r_k$, $r=r_k^{a}$ and then
\eqref{eq:frostman_seq},
$$I_1\leq B_{x}(r_k^{a})
\leq\left(\frac{4r_k^{a}}{r_k}\right)^{A(1+\varepsilon)}B_{x}(r_k)
\leq 4^{A(1+\varepsilon)}\,r_k^{\,p-A(1+\varepsilon)(1-a)}.$$
Similarly, using \eqref{eq:growth_thm} with $\rho=r_k$ and $r=h$ for
$h\in[r_k^{a},1]$, together with \eqref{eq:frostman_seq},
$$I_2\leq m\,4^{A(1+\varepsilon)}\,r_k^{\,p+m-A(1+\varepsilon)}
\int_{r_k^{a}}^{1}h^{A(1+\varepsilon)-m-1}\,dh,$$
and finally $I_3\leq m\,r_k^{m}\int_{1}^{\infty}h^{-m-1}dh=r_k^{m}$.

We estimate the remaining integral in the two possible cases. If
$m<A(1+\varepsilon)$, then
$$\int_{r_k^{a}}^{1}h^{A(1+\varepsilon)-m-1}\,dh
\leq\frac{1}{A(1+\varepsilon)-m},$$
so that $I_2\leq\frac{m\,4^{A(1+\varepsilon)}}{A(1+\varepsilon)-m}\,
r_k^{\,p+m-A(1+\varepsilon)}$, and the exponent exceeds $t$ by
\eqref{eq:choice_params}. If $m>A(1+\varepsilon)$, then
$$\int_{r_k^{a}}^{1}h^{A(1+\varepsilon)-m-1}\,dh
\leq\frac{r_k^{-a(m-A(1+\varepsilon))}}{m-A(1+\varepsilon)},$$
so that $I_2\leq\frac{m\,4^{A(1+\varepsilon)}}{m-A(1+\varepsilon)}\,
r_k^{\,p+(1-a)(m-A(1+\varepsilon))}$, and the exponent exceeds $p>t$.
The exponent of $I_1$ exceeds $t$ by \eqref{eq:choice_params}, and so
does that of $I_3$, since $p\leq\dim_P\mu\leq A\leq A(1+\varepsilon)$
gives $m\geq p+m-A(1+\varepsilon)>t$.

Consequently $r_k^{-t}F_{m}^{\mu}(x,r_k)\to 0$, so that
$$\liminf_{r\to 0}r^{-t}F_{m}^{\mu}(x,r)=0
\qquad\text{for }\mu\text{-a.e.\ }x\in\mathbb{R}^n,$$
and therefore $t\leq\dim_P^m\mu$. As $t<\min\{p,\ p+m-A\}$ and
$p<\dim_P\mu$ were arbitrary,
$$\dim_P^m\mu\geq\min\{\dim_P\mu,\ \dim_P\mu+m-A\}
=\dim_P\mu-\max\{0,\ \dim_A E-m\}.$$
\end{proof}

\begin{remark}\label{rem:not_adaptation}
As mentioned in the introduction, Theorem~\ref{thm:profile_assouad_bound}
is not an adaptation of the corresponding result for sets, obtained by
Falconer, Fraser and Shmerkin \cite[Theorem 2.2]{falconer2021assouad}. Their argument
relies on the capacity formulation of the box dimension profiles, in
which the profile of a set is defined through an infimum over all
measures supported on it; this allows one to choose, at each scale $r$, a
measure adapted to that scale, and their proof takes it to be the
normalized counting measure on a maximal $r$-separated subset of $F$,
the packing case being deduced afterwards by a Baire category argument.
In the present setting no such freedom is available: the measure $\mu$ is
given and is the same at every scale, and the profile is defined through
the behaviour of its own potentials at $\mu$-almost every point.

On the contrary, Theorem~\ref{thm:profile_assouad_bound} yields the Assouad dimension version of the set-theoretic statement as a corollary. Indeed, let
$E\subset\mathbb{R}^n$ be an Borel set. For every
$\mu\in\mathcal{M}_c^+(E)$ we have $\dim_A(\operatorname{supp}\mu)\leq
\dim_A E$, so that
$$\dim_P^m\mu\geq\dim_P\mu-\max\{0,\ \dim_A E-m\},$$
and taking the supremum over $\mu\in\mathcal{M}_c^+(E)$, together with the equality
$\dim_P^m E=\sup\{\dim_P^m\mu\ : \mu\in\mathcal{M}_c^+(E)\}$ gives
$$\dim_P^m E\geq\dim_P E-\max\{0,\ \dim_A E-m\},$$
which is \cite[Theorem 2.2]{falconer2021assouad} in its Assouad dimension form.
\end{remark}

\section*{Funding}
The research was partially supported by grants PICT 2022-4875 (ANPCyT), PIP 202287/22 (CONICET), and PROICO 3-0720 ``An\'alisis Real y Funcional. Ec. Diferenciales''.

\end{document}